\newtheorem{thm}{\textbf Theorem}[section]
\newtheorem{lem}[thm]{\textbf Lemma}
\newtheorem{rem}[thm]{\textbf Remark}
\newtheorem{cor}[thm]{\textbf Corollary}
\newtheorem{defin}[thm]{\textbf Definition}
\newtheorem{assum}[thm]{\textbf Assumption}
\newcommand{\be}{\begin{eqnarray*}}
\newcommand{\ee}{\end{eqnarray*}}
\begin{document}

\title{\bf L\'evy-Khintchine type
representation of Dirichlet generators\\ and Semi-Dirichlet forms}
  \author{Wei Sun,\ \ \ \ Jing Zhang
            \\ Department of Mathematics and Statistics\\
             Concordia University\\
             Montreal, H3G 1M8, Canada\\
             wei.sun@concordia.ca (W. Sun),\\
              waangel520@gmail.com (J. Zhang)}
   \date{}
\maketitle

\begin{abstract}
\noindent Let $U$ be an open set of $\mathbb{R}^n$, $m$ a positive
Radon measure on $U$ such that ${\rm supp}[m]=U$, and
$(P_t)_{t>0}$ a strongly continuous contraction sub-Markovian
semigroup on $L^2(U;m)$. We investigate the structure of
$(P_t)_{t>0}$.
\begin{enumerate}\item[(i)] Denote  respectively by $(A,D(A))$ and $(\hat A,D(\hat A))$ the generator and the co-generator
of $(P_t)_{t>0}$. Under the assumption that $C^{\infty}_0(U)\subset D(A)\cap D(\hat A)$, we give an explicit L\'evy-Khintchine type representation of $A$ on $C^{\infty}_0(U)$.
\item[(ii)] If $(P_t)_{t>0}$ is an analytic semigroup and hence is associated with a semi-Dirichlet form $({\cal E}, D({\cal E}))$, we give an explicit characterization of ${\cal E}$  on $C^{\infty}_0(U)$ under the assumption that $C^{\infty}_0(U)\subset D({\cal E})$.
\end{enumerate}
We also present a LeJan type transformation rule for the diffusion part of
regular semi-Dirichlet forms on general state spaces. \vskip 0.5cm \noindent {\it
Keywords:} Dirichlet generator; Semi-Dirichlet form; Markov process; L\'evy-Khintchine type representation; LeJan type
transformation rule; Beurling-Deny formula\vskip 0.5cm \noindent

\end{abstract}

%%%%%%%%%%%%%%%%%%%%%%%%%%%%%%%%%%%%%%%%%%%%%%%%%%%%%%%%%%%%%%%%%%%%%%%
\section[short title]{Introduction and main results}

Let $(X_t)_{t\geq 0}$ be a L\'{e}vy process  on
$\mathbb{R}^n$. By the celebrated L\'{e}vy-Khintchine formula, we know that the infinitesimal generator $A$ of $(X_t)_{t\geq 0}$
is characterized by (cf. \cite[Theorem 31.5]{S99})
\begin{eqnarray}\label{decom23}
& &Au(y)=\frac{1}{2}\sum_{i,j=1}^nQ_{ij}\frac{\partial^2u}{\partial y_i\partial y_j}(y)+\sum_{i=1}^nb_i\frac{\partial u}{\partial y_i}(y)\nonumber\\
& &\ \ \ +\int_{\mathbb{R}^n}\left(
u(y+x)-u(y)-\sum_{i=1}^nx_i\frac{\partial u}{\partial
y_i}(y)I_{\{|x|\le 1\}}(x)\right)\nu(dx),\ \ u\in
C_0^{\infty}(\mathbb{R}^n),\ \ \ \ \ \ \ \ \ \
\end{eqnarray}
where $Q=(Q_{ij})_{1\le i,j\le n}$ is a symmetric
nonnegative-definite $n\times n$ matrix, $(b_1,\dots,b_n)\in
\mathbb{R}^n$, and $\nu$ is a L\'{e}vy measure satisfying
$\nu(\{0\})=0$ and $\int_{\mathbb{R}^n}(1\wedge
|x|^2)\nu(dx)<\infty$. Hereafter, $|\cdot|$ denotes the Euclidean
metric of $\mathbb{R}^n$, $C(\mathbb{R}^n)$ denotes the set of all
continuous functions on $\mathbb{R}^n$, and
$C_0^{\infty}(\mathbb{R}^n)$ denotes the set of all infinitely
differentiable functions on $\mathbb{R}^n$ with compact supports.

The decomposition of type (\ref{decom23}) also holds for Feller
processes on $\mathbb{R}^n$. In \cite{Cou},  Courr\`{e}ge proved that if $A$ is a linear operator
from $C_0^{\infty}(\mathbb{R}^n)$ to $C(\mathbb{R}^n)$ satisfying
the positive maximum principle, then $A$ is decomposed as
\begin{eqnarray*}
Au(y)&=&\frac{1}{2}\sum_{i,j=1}^nq_{ij}(y)\frac{\partial^2u}{\partial y_i\partial y_j}(y)+\sum_{i=1}^nl_i(y)\frac{\partial u}{\partial y_i}(y)+\gamma(y)u(y)\nonumber\\
& &+\int_{\mathbb{R}^n}\left(u(y+x)-u(y)w(x)-{\sum_{i=1}^nx_i\frac{\partial u}{\partial y_i}(y)}w(x)\right)\mu(y,dx),
\end{eqnarray*}
where $\sum_{i,j=1}^nq_{ij}(y)\xi_i\xi_j\ge 0$ for all $y\in
\mathbb{R}^n$ and $(\xi_1,\dots,\xi_n)\in \mathbb{R}^n$, the
function $y\rightarrow \sum_{i,j=1}^nq_{ij}(y)\xi_i\xi_j$ is upper
semicontinuous, $l_i\in C(\mathbb{R}^n)$, $1\le i\le n$,
$\gamma\in C(\mathbb{R}^n)$ with $\gamma\le 0$, $\mu$ is a kernel
on $\mathbb{R}^n\times {\cal B}(\mathbb{R}^n)$, and $w\in
C_0^{\infty}(\mathbb{R}^n)$ with $0\le w\le 1$ and $w=1$ on
$\{x\in \mathbb{R}^n:|x|\le 1\}$ (cf. \cite[\S 4.5]{J01}).

Suppose now that $(X_t)_{t\geq 0}$ is a general right continuous
Markov process on $\mathbb{R}^n$, or more generally, on an open
set $U$ of $\mathbb{R}^n$. In this paper, we are interested in describing the
analytic structure of $(X_t)_{t\geq 0}$. Denote by $(P_t)_{t>0}$ the
transition semigroup of $(X_t)_{t\geq 0}$. Suppose that there is a
positive Radon measure $m$ on $U$ such that $(P_t)_{t>0}$ acts as a
strongly continuous contraction semigroup on $L^2(U;m)$. Note that this condition is fulfilled if, for example, $m$ is an excessive measure
of $(X_t)_{t\geq 0}$. Denote by
$(A,D(A))$ the $L^2$-generator of $(P_t)_{t>0}$. Then, $(A,D(A))$ is a Dirichlet
operator, i.e., $(Au, (u-1)\vee 0)\le 0$ for all $u\in D(A)$ (cf.
\cite[Proposition I.4.3]{MR92}). Hereafter $(\cdot,\cdot)$ denotes
the inner product of $L^2(U;m)$.

Denote by $(\hat A,D(\hat A))$ the co-generator
of $(P_t)_{t>0}$. Note that generally $(\hat A,D(\hat A))$ may not be a Dirichlet
operator (see \cite[Remark 2.2(ii)]{MR95} for an example). We assume that $C^{\infty}_{0}(U)\subset D(A)\cap D(\hat A)$ and
consider the following bilinear form
\begin{equation}\label{cvbn}
{\cal E}(u,v):=(-Au,v)\ \ {\rm for}\ u,v\in C^{\infty}_{0}(U).
\end{equation}
Here we would like to remind the reader that a generator on an $L^2$-space is a Dirichlet
operator if and only if its associated semigroup is sub-Markovian. It does not imply that its associated bilinear form is a (pre-) semi-Dirichlet form since the sector condition might not be satisfied. Denote by $(G_{\beta})_{\beta>0}$ and $(\hat G_{\beta})_{\beta>0}$
the resolvent and co-resolvent of $(P_t)_{t>0}$, respectively.
Similar to \cite[\S 2]{HC06} (cf. also \cite[\S 3.2]{Fu94}) and noting that the sector condition is not used therein, we
can prove the following lemma by virtue of the fact that ${\cal
E}(u,v)=\lim_{\beta\rightarrow\infty}\beta(u-\beta G_{\beta}u,v)$ for $u,v\in C^{\infty}_{0}(U)$.
\begin{lem}\label{zxcv22}
(i) For $\beta>0$, there exist unique positive Radon measures
$\sigma_{\beta}$ and $\hat\sigma_{\beta}$ on $U\times U$
satisfying
\begin{equation}\label{E1}
(\beta G_{\beta}u,v)=\int_{U\times U}u(x)v(y)\sigma_{\beta}(dxdy)
\end{equation}
and
$$
(\beta \hat G_{\beta}u,v)=\int_{U\times
U}u(x)v(y)\hat\sigma_{\beta}(dxdy)
$$
for $u,v\in L^2(U;m)$.

(ii) There exist a unique positive Radon measure $J$ on $U\times
U$ off the diagonal $d$ and a unique positive Radon measure $K$ on
$U$ such that for $v\in C^{\infty}_{0}(U)$ and $u\in \{g\in
C^{\infty}_{0}(U): g\ {\rm is\ constant\ on\ a\ neighbourhood\ of\
{\rm supp}}[v]\}$,
\begin{equation}\label{mnb2}
{\cal E}(u,v)=\int_{U\times U\backslash
d}2(u(y)-u(x))v(y)J(dxdy)+\int_{U}u(x)v(x)K(dx).
\end{equation}
Hereafter ${\rm supp}[u]$ denotes the support of $u$. $J$ and $K$
are called the jumping and killing measures, respectively.

(iii) $(\beta/2)\sigma_{\beta}\rightarrow J$ and
$(\beta/2)\hat\sigma_{\beta}\rightarrow \hat J$ vaguely on $U\times
U\backslash d$ as $\beta\rightarrow\infty$, where $\hat
J(dxdy):=J(dydx)$.
\end{lem}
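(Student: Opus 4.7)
The plan is to adapt the approach of \cite{Fu94} and \cite{HC06} to the non-sectorial setting, using only sub-Markovianity of $\beta G_\beta$ and the identity ${\cal E}(u,v)=\lim_{\beta\to\infty}\beta(u-\beta G_\beta u,v)$, in the order (i), (iii), (ii), since the jumping measure $J$ produced in (iii) is needed to extract the decomposition in (ii). For (i): the bilinear form $(u,v)\mapsto(\beta G_\beta u,v)$ on $C_0(U)\times C_0(U)$ is positive (sub-Markovianity) and locally bounded. A bilinear Riesz--Markov theorem---equivalently, iterated Riesz, first representing $u\mapsto(\beta G_\beta u,v)$ by a Radon measure for each fixed $v\ge 0$, then checking the kernel property in $v$---produces the unique positive Radon measure $\sigma_\beta$ on $U\times U$ satisfying (\ref{E1}), extended to $L^2(U;m)$ by density. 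The same argument with $\hat G_\beta$ furnishes $\hat\sigma_\beta$.

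For (iii), choose $u,v\in C_0^\infty(U)$ with disjoint supports so that $(u,v)=0$ and
$$\beta(u-\beta G_\beta u,v)=-2\!\int u(x)v(y)\,(\beta/2)\sigma_\beta(dxdy).$$
The left side converges to ${\cal E}(u,v)$, and is non-positive when $u,v\ge 0$. Thus $u\otimes v\mapsto-\frac12{\cal E}(u,v)$ is a well-defined positive functional on the linear span of disjoint-support tensor products, which by a partition-of-unity argument (using the positive distance from any compact in $U\times U\setminus d$ to $d$) is dense in $C_c(U\times U\setminus d)$. Riesz--Markov then gives $J$, and the vague convergence $(\beta/2)\sigma_\beta\to J$ follows; the parallel argument with $\hat G_\beta$ gives $\hat J$.

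For (ii), fix $v\in C_0^\infty(U)$ and $u\in C_0^\infty(U)$ that is constant $=c$ on a neighborhood $V$ of $\mathrm{supp}[v]$. Splitting the integral against $\sigma_\beta$ by whether $x\in V$ (and using $v=v\mathbf 1_V$) yields
$$\beta(u-\beta G_\beta u,v)=2\!\int_{V^c\times V}(c-u(x))v(y)\,(\beta/2)\sigma_\beta(dxdy)+c\!\int v(y)\,K_\beta(dy),$$
where $K_\beta(dy):=\beta[m(dy)-\sigma_\beta(U\times dy)]\ge 0$. Since $u(y)=c$ on $\mathrm{supp}[v]$ and the $J$-integrand $2(u(y)-u(x))v(y)$ vanishes on $V\times V\setminus d$ (where $u(x)=u(y)=c$) and off $V^c\times V$ (where $v(y)=0$), part (iii) identifies the limit of the first summand as $\int 2(u(y)-u(x))v(y)J(dxdy)$. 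Hence the second summand also converges; choosing $c=1$ with $u\equiv 1$ near $\mathrm{supp}[v]$ and letting $v$ vary defines the positive Radon measure $K$ via $\int v\,dK:=\lim_\beta\int v\,dK_\beta$, giving (\ref{mnb2}). Uniqueness of $J$ and $K$ is immediate from the representation.

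The principal obstacle is to guarantee that these vague limits actually determine Radon measures, not merely positive functionals on dense subspaces. For $J$ this requires uniform-in-$\beta$ bounds on $(\beta/2)\sigma_\beta(F)$ for each compact $F\subset U\times U\setminus d$: one dominates $\mathbf 1_F$ by a finite sum $\sum_i u_i\otimes v_i$ of disjoint-support products (available because $F$ lies at positive distance from $d$), and the already-verified convergence on each summand forces $\sup_\beta(\beta/2)\sigma_\beta(F)<\infty$. The analogous local bound for $K_\beta$ on compacts of $U$ is then extracted from the splitting above, using the controlled jump part together with the convergent left-hand side $\beta(u-\beta G_\beta u,v)$.
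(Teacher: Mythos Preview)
Your plan matches the route the paper intends (it simply cites \cite{HC06} and \cite{Fu94}, noting that the sector condition is not needed and that one uses ${\cal E}(u,v)=\lim_{\beta\to\infty}\beta(u-\beta G_\beta u,v)$), and the ordering (i)$\to$(iii)$\to$(ii) is the correct one. Part (i) and the construction of $J$ in (iii) via disjoint-support tensor products are fine. One small remark: for $\hat\sigma_\beta$ and $\hat J$ you do not need to rerun the argument through $\hat G_\beta$ (which is not sub-Markovian in general); the identity $(\beta\hat G_\beta u,v)=(\beta G_\beta v,u)$ gives $\hat\sigma_\beta(dxdy)=\sigma_\beta(dydx)$ directly, and $\hat J=J\circ{\rm flip}$ follows from (iii) for $\sigma_\beta$.

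There is, however, a genuine gap in your treatment of (ii). You write the splitting
\[
\beta(u-\beta G_\beta u,v)=2\!\int_{V^c\times V}(c-u(x))v(y)\,\tfrac{\beta}{2}\sigma_\beta(dxdy)+c\!\int v\,dK_\beta
\]
and then say ``part (iii) identifies the limit of the first summand''. But when $c\neq 0$ the continuous function $(u(y)-u(x))v(y)$ is \emph{not} compactly supported in $U\times U\setminus d$: for $y\in{\rm supp}[v]$ and $x$ outside ${\rm supp}[u]$ it equals $c\,v(y)\neq 0$. Vague convergence of $(\beta/2)\sigma_\beta$ to $J$ therefore does not apply directly, and your uniform bound on $(\beta/2)\sigma_\beta(F)$ for compact $F\subset U\times U\setminus d$ does not control the tail in the $x$-variable. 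This is exactly the place where the references you are following insert an energy estimate: one shows, uniformly in $\beta$,
\[
\int_{U\times U}(\phi(y)-\phi(x))^2 w(y)\,\tfrac{\beta}{2}\sigma_\beta(dxdy)\ \le\ {\cal E}(\phi,\phi w)-\tfrac12{\cal E}(\phi^2,w)
\]
for $\phi,w\in C_0^\infty(U)$, $w\ge 0$ (this is precisely the $\sigma_\beta$-level version of the paper's later Lemma~\ref{l1}(i)). Choosing $\phi$ with ${\rm supp}[\phi]\subset V$ and $\phi=1$ on ${\rm supp}[v]$, one has $(\phi(y)-\phi(x))^2=1$ on $V^c\times{\rm supp}[v]$, and this dominates $|(u(y)-u(x))v(y)|$ up to a constant; the resulting uniform tail control lets you pass to the limit outside a large compact set and close the argument. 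Without this step neither the convergence of the jump term nor, consequently, the existence of $\lim_\beta\int v\,dK_\beta$ is justified. Add this estimate (or, equivalently, first handle $c=0$ by compactly supported vague convergence, then reduce the general case to $c=0$ plus the $c=1$ case via $u=c\chi+(u-c\chi)$, using the energy bound to make sense of the residual $J$-integral).
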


For $\delta>0$, we define
$$
U^{\delta}:=\{x\in U:\inf_{y\in \partial U}|x-y|>\delta\}.
$$
Hereafter, for $B\subset \mathbb{R}^n$, we denote by $\partial B$ its
boundary in $\mathbb{R}^n$.

Now we can state the first main result of this paper.
\begin{thm}\label{new90} Let $U$ be an open set of $\mathbb{R}^n$ and $m$ a positive Radon measure on $U$ such that ${\rm supp}[m]=U$.  Suppose that
$(A, D(A))$ is a generator on $L^2(U;m)$ such that $A$ is a Dirichlet operator and
$C^{\infty}_0(U)\subset D(A)\cap D(\hat A)$. Let $\delta>0$ be a
constant such that $U^{\delta}\not=\emptyset$. Then, we have the
decomposition:
\begin{eqnarray}\label{repre}
(-Au,v)&=&\frac{1}{2}\sum_{i,j=1}^n\int_{U}\frac{\partial
u}{\partial x_i}\frac{\partial v}{\partial x_j}\nu_{ij}(dx)
+\sum_{i=1}^n\int_{U^{\delta}}\frac{\partial
u}{\partial x_i}(x)v(x)\nu^{\delta}_i(dx)\nonumber\\
& &+\int_{U\times U\backslash d}\sum_{i=1}^{n}(y_{i}-x_{i})\left(\frac{\partial u}{\partial y_{i}}(y)v(y)-\frac{\partial u}{\partial x_{i}}(x)v(x)\right)I_{\{|x-y|\le\delta\}}(x,y)J(dxdy)\nonumber\\
& &+\int_{U\times U\backslash d}2\left(u(y)-u(x)-\sum_{i=1}^{n}(y_{i}-x_{i})\frac{\partial u}{\partial y_{i}}(y)I_{\{|x-y|\le\delta\}}(x,y)\right)v(y)J(dxdy)\nonumber\\
& &+\int_Uu(x)v(x)K(dx),\ \ \ \ \ \ \ \ \forall u,v\in
C^{\infty}_0(U^{\delta}),
\end{eqnarray}
where $J$ and $K$ are the jumping and killing measures,
respectively, $\{\nu_{ij}\}_{i,j=1}^n$ are signed Radon measures
on $U$ such that for any compact set $K\subset U$,
$\nu_{ij}(K)=\nu_{ji}(K)$ and
$\sum_{i,j=1}^n\xi_i\xi_j\nu_{ij}(K)\ge 0$ for all
$(\xi_1,\dots,\xi_n)\in \mathbb{R}^n$, and
$\{\nu^{\delta}_i\}_{i=1}^n$ are signed Radon measures on
$U^{\delta}$.
\end{thm}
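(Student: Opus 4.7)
The strategy is to isolate the jump and killing pieces supplied by Lemma \ref{zxcv22} and to extract the drift and diffusion measures from the remaining local part via LeJan-style coordinate test functions. A crucial preliminary is the integrability of the two jump integrals in (\ref{repre}): the large-jump piece on $\{|x-y|>\delta\}$ is controlled by Lemma \ref{zxcv22}(iii), while the small-jump piece reduces to the bound $\int_{\{|x-y|\le\delta\}\cap(F\times F)}|x-y|^2\,J(dxdy)<\infty$ for every compact $F\subset U$. I would prove this bound by inserting coordinate test functions of the form $(x_i-a_i)\chi(x)$, with $\chi\in C_0^\infty(U)$ equal to $1$ on $F$, into the positivity $\beta(u-\beta G_\beta u,u)\ge 0$ (which holds since $(P_t)$ is a contractive semigroup on $L^2$) and passing to the vague limit via Lemma \ref{zxcv22}(iii).

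Next, define the residual bilinear form $B(u,v)$ by subtracting from $(-Au,v)$ the killing term $\int uv\,dK$ together with the two jump integrals on the right of (\ref{repre}). A direct comparison with Lemma \ref{zxcv22}(ii) shows that $B(u,v)=0$ whenever $u$ is constant on an open neighborhood of $\mathrm{supp}[v]$, so $B$ is local. The crucial upgrade is that $B(u,v)$ depends only on the $1$-jet of $u$ on $\mathrm{supp}[v]$: if the values and first partials of $u$ all vanish on $\mathrm{supp}[v]$, then $B(u,v)=0$. I would prove this by the Taylor estimate $u(y)-u(x)-\sum_i(y_i-x_i)\partial_iu(y)=O(|x-y|^2)$, whose remainder is integrable against $J$ by the previous step, followed by matching with Lemma \ref{zxcv22}(ii) applied to a cut-off of $u$ that is locally constant on $\mathrm{supp}[v]$.

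Once this $1$-jet locality is in place, fix a reference point $a$ and a cut-off $\chi\in C_0^\infty(U^\delta)$, and set $\varphi_i(x):=(x_i-a_i)\chi(x)$ and $\varphi_{ij}(x):=(x_i-a_i)(x_j-a_j)\chi(x)$. The values $B(\varphi_i,v)$ and $B(\varphi_{ij},v)$ are then independent of the particular $\chi$ (so long as $\chi\equiv 1$ near $\mathrm{supp}[v]$) and bilinear in $v$, which lets one define signed Radon measures $\nu_i^\delta$ on $U^\delta$ and $\nu_{ij}$ on $U$ through these pairings, peeling off the $\nu_i^\delta$-contribution when extracting $\nu_{ij}$ from $B(\varphi_{ij},v)$. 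Symmetry $\nu_{ij}=\nu_{ji}$ is immediate from $\varphi_{ij}=\varphi_{ji}$, and positive-semidefiniteness $\sum_{ij}\xi_i\xi_j\nu_{ij}(F)\ge 0$ follows by applying $(-Au,u)\ge 0$ to $u=\sum_i\xi_i\varphi_i$ and concentrating $v$ near $F$. The identity (\ref{repre}) for general $u\in C_0^\infty(U^\delta)$ then holds because both sides share the same $1$-jet data on $\mathrm{supp}[v]$, and $B$ sees only that data.

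The main obstacle is precisely the $1$-jet locality of $B$: turning the pointwise Taylor estimate into a bilinear-form identity demands a delicate dominated-convergence argument that pairs the near-diagonal $|x-y|^2$-integrability of $J$ from the first step against the Taylor remainder, and relies crucially on the explicit form of Lemma \ref{zxcv22}(ii) to cancel the non-polynomial contributions from large jumps. Once this hurdle is cleared, the construction of $\nu_i^\delta$ and $\nu_{ij}$ and the verification of (\ref{repre}) are routine LeJan-style bookkeeping.
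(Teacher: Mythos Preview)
Your overall architecture—subtract the jump and killing contributions and then read off the diffusion and drift measures from the residual local form $B$—is natural, but the argument for the key step, the $1$-jet locality of $B$, does not go through. Lemma~\ref{zxcv22}(ii) applies only when $u$ is constant on an open \emph{neighbourhood} of ${\rm supp}[v]$, not merely when $u$ and $\nabla u$ vanish on the closed set ${\rm supp}[v]$. Your proposed bridge is to cut $u$ off near ${\rm supp}[v]$ and control the remainder by Taylor, but if you set $u_\epsilon=(1-\psi_\epsilon)u$ with $\psi_\epsilon\equiv 1$ on an $\epsilon$-neighbourhood of ${\rm supp}[v]$, the correction $\psi_\epsilon u$ has second derivatives of order $O(1)$ (since $\partial^2\psi_\epsilon=O(\epsilon^{-2})$ meets $u=O(\epsilon^2)$), and nothing in the hypotheses forces $(-A(\psi_\epsilon u),v)\to 0$. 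The near-diagonal $|x-y|^2$-integrability of $J$ controls only the jump part, which you have already subtracted; it says nothing about the local piece of $\mathcal{E}$, which is exactly what is in question. In effect, the estimate needed to close the loop is equivalent to the conclusion you want. A secondary issue: even granting $1$-jet locality, the functional $v\mapsto B(\varphi_i,v)$ is a first-order distribution in $v$ (it sees $\partial_j v$ through the would-be $\nu_{ij}$ term), so it does not directly define a Radon measure $\nu^\delta_i$; disentangling $\nu_{ij}$ from $\nu^\delta_i$ is more than bookkeeping.

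The paper circumvents the locality problem by never asserting $1$-jet locality directly. It works with the resolvent kernels $\sigma_\beta$ and the approximation $\mathcal{E}(u,v)=\lim_\beta\beta(u-\beta G_\beta u,v)$, defines the local piece $\mathcal{E}^{c,\delta}$ as an explicit double limit (Definition~\ref{def1}), and splits it into a symmetric energy measure $\mu^c_{<\cdot,\cdot>}$ and an antisymmetric functional $L^\delta$. The crucial step is an algebraic \emph{derivation identity} $d\mu^c_{<uv,w>}=u\,d\mu^c_{<v,w>}+v\,d\mu^c_{<u,w>}$ (and its analogue for $L^\delta$), proved by direct manipulation of the $\sigma_\beta$-integrals, the point being that the cubic term $(u(y)-u(x))^2(v(y)-v(x))$ integrated over shrinking diagonal strips tends to zero (Theorem~\ref{thm9}). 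Polynomial approximation then upgrades this to the full LeJan chain rule (Theorem~\ref{t28}), after which one sets $\nu_{ij}=\mu^c_{<x_i,x_j>}$ and $\langle\nu^\delta_i,f\rangle=\langle L^\delta(x_i,1),f\rangle$; positive semidefiniteness of $(\nu_{ij})$ is immediate from $\mu^c_{<u>}\ge 0$, and that $\nu^\delta_i$ is a signed Radon measure (rather than a higher-order distribution) is read off from the explicit formula~(\ref{FG1}) together with the hypothesis $C_0^\infty(U)\subset D(A)\cap D(\hat A)$. The derivation identity proved at the resolvent level is the ingredient your sketch is missing.
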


On the one hand, Theorem \ref{new90} is a result of analysis, which characterizes a large class of Dirichlet generators on $\mathbb{R}^n$; one the other hand, it generalizes the classical result of Courr\`{e}ge from the Feller process setting to the right continuous Markov process setting. The representation (\ref{repre}) improves our understanding of Markov processes and has many potential applications. For example, it sheds light on the long-standing open problem, ``when does a Markov process satisfy Hunt's hypothesis (H)?" (cf. \cite{BG68,BG70,F1,GR86, HMS11,HS11,R88,Si77} and the references therein). For a dual diffusion on an open set of $\mathbb{R}^n$, (\ref{repre}) indicates the strong connection between Hunt's hypothesis (H) and the condition that the diffusion is locally associated with a semi-Dirichlet form. Here we would like to point out that Theorem \ref{new90} does not assume the sector condition although its proof is motivated by the theory of Dirichlet forms, and that the assumption $C^{\infty}_0(U)\subset D(A)\cap D(\hat A)$ is reasonable for many applications, for example, when the martingale problem of Markov processes is studied (cf. \cite[Chapter 4]{EK}).

If the diffusion part of $(X_t)_{t\geq 0}$ corresponds to a
differential operator with very singular coefficients, then it is not
suitable to assume that $C^{\infty}_{0}(U)\subset D(A)\cap
D(\hat A)$ any more. In this case, we will adopt the framework of
semi-Dirichlet forms to investigate the analytic structure of
$(X_t)_{t\geq 0}$. Suppose that $(A,D(A))$ satisfies the sector
condition, i.e., there exists a positive constant $\kappa$ such
that
\begin{equation}\label{kkkk1}
|((1-A)u,v)|\le \kappa((1-A)u,u)^{1/2}((1-A)v,v)^{1/2}, \ \
\forall u,v\in D(A).
\end{equation}
Note that $(A,D(A))$ satisfies the sector condition (\ref{kkkk1})
if and only if $(P_t)_{t>0}$ is an analytic semigroup (cf.
\cite[Corollary I.2.21]{MR92}). Denote by $({\cal E}, D({\cal
E}))$ the semi-Dirichlet form obtained by completing $D(A)$ w.r.t.
the $((1-A)u,u)^{1/2}$-norm. Assume that $C^{\infty}_{0}(U)\subset
D({\cal E})$. Then, one finds that Lemma \ref{zxcv22} also holds
for $({\cal E}, D({\cal E}))$. We make the following assumption.
\begin{assum}\label{as1} Let $O$ be a relatively compact
open set of $U$. Suppose that $\{f_n\}_{n=1}^{\infty}\subset
C^{\infty}_{0}(O)$ and $f\in C^{\infty}_{0}(O)$ satisfying $f_n$
and all of its partial derivatives converge uniformly to $f$ and
its corresponding partial derivatives as $n\rightarrow\infty$.
Then, ${\cal E}(f,g)=\lim_{n\rightarrow\infty}{\cal E}(f_n,g)$ and
${\cal E}(g,f)=\lim_{n\rightarrow\infty}{\cal E}(g, f_n)$
 for any $g\in
C^{\infty}_{0}(U)$.
\end{assum}
We will obtain the following L\'evy-Khintchine type representation of semi-Dirichlet forms, which generalizes the
classical Beurling-Deny formula of symmetric Dirichlet forms
on open sets of $\mathbb{R}^n$ (cf. \cite[Theorem 3.2.3]{Fu94}).
\begin{thm}\label{lll} Let $U$ be an open set of $\mathbb{R}^n$ and $m$ a positive Radon measure on $U$ such that ${\rm supp}[m]=U$. Suppose
that $({\cal E}, D({\cal E}))$ is a semi-Dirichlet form on
$L^2(U;m)$ such that $C^{\infty}_0(U)\subset D({\cal E})$ and
Assumption \ref{as1} holds. Let $\delta>0$ be a constant such that
$U^{\delta}\not=\emptyset$. Then, we have the decomposition:
\begin{eqnarray*}
{\mathcal{E}}(u,v)&=&\frac{1}{2}\sum_{i,j=1}^n\int_{U}\frac{\partial
u}{\partial x_i}\frac{\partial v}{\partial x_j}\nu_{ij}(dx)
+\sum_{i=1}^n\left<\Psi^{\delta}_i,\frac{\partial
u}{\partial x_i}v\right>\nonumber\\
& &+\int_{U\times U\backslash d}\sum_{i=1}^{n}(y_{i}-x_{i})\left(\frac{\partial u}{\partial y_{i}}(y)v(y)-\frac{\partial u}{\partial x_{i}}(x)v(x)\right)I_{\{|x-y|\le\delta\}}(x,y)J(dxdy)\nonumber\\
& &+\int_{U\times U\backslash d}2\left(u(y)-u(x)-\sum_{i=1}^{n}(y_{i}-x_{i})\frac{\partial u}{\partial y_{i}}(y)I_{\{|x-y|\le\delta\}}(x,y)\right)v(y)J(dxdy)\nonumber\\
& &+\int_Uu(x)v(x)K(dx),\ \ \ \ \ \ \ \ \forall u,v\in
C^{\infty}_0(U^{\delta}),
\end{eqnarray*}
where $J$ and $K$ are the jumping and killing measures,
respectively, $\{\nu_{ij}\}_{i,j=1}^n$ are signed Radon measures
on $U$ such that for any compact set $K\subset U$,
$\nu_{ij}(K)=\nu_{ji}(K)$ and
$\sum_{i,j=1}^n\xi_i\xi_j\nu_{ij}(K)\ge 0$ for all
$(\xi_1,\dots,\xi_n)\in \mathbb{R}^n$, and
$\{\Psi^{\delta}_i\}_{i=1}^n$ are generalized functions on
$U^{\delta}$.
\end{thm}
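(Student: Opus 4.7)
The plan is to mirror the proof of Theorem \ref{new90}, replacing the expression $(-Au,v)$ throughout by $\mathcal{E}(u,v)$, and to use Assumption \ref{as1} in place of every step where the proof of Theorem \ref{new90} relied on the membership $C^{\infty}_0(U)\subset D(A)\cap D(\hat A)$. First, I would apply the semi-Dirichlet form analogue of Lemma \ref{zxcv22} (which the text notes continues to hold) to extract the jumping measure $J$ on $U\times U\setminus d$ and the killing measure $K$ on $U$ associated with $(\mathcal{E},D(\mathcal{E}))$. Using (\ref{mnb2}), I would then, for $u,v\in C^{\infty}_0(U^\delta)$, define the last three integrals on the right-hand side of the asserted decomposition and set
$$
\mathcal{R}(u,v) \;:=\; \mathcal{E}(u,v) \;-\; (\text{third} + \text{fourth} + \text{fifth terms}).
$$
The core task is to represent $\mathcal{R}(u,v)$ as the sum of the diffusion term $\frac{1}{2}\sum_{i,j}\int \partial_i u\,\partial_j v\,\nu_{ij}(dx)$ and a drift pairing $\sum_i\langle \Psi^\delta_i,(\partial_i u)\,v\rangle$.

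Next I would establish the locality property: if $u$ is constant on a neighbourhood of $\mathrm{supp}[v]$, then $\mathcal{R}(u,v)=0$. This follows from Lemma \ref{zxcv22}(ii) combined with the observation that, on such pairs, the compensated jump integrals (third and fourth terms) reduce exactly to the expression in (\ref{mnb2}) because every $\partial_i u$ that appears is evaluated either at a point where $u$ is locally constant or is paired with a factor vanishing on $\mathrm{supp}[v]$. With locality in hand, I would copy from Theorem \ref{new90} the partition-of-unity argument that reduces the analysis of $\mathcal{R}$ to small balls, where one may Taylor-expand $u$ to second order around points of $\mathrm{supp}[v]$ and capture only the zeroth, first, and second-order contributions in $u$. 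The vanishing of $\mathcal{R}$ whenever $u$ is locally constant removes the zeroth-order contribution, leaving only first and second-order terms.

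To isolate the matrix of diffusion coefficients $\nu_{ij}$, I would polarize the symmetric part $\tfrac{1}{2}(\mathcal{R}(u,v)+\mathcal{R}(v,u))$ by testing with functions of the form $u=x_i\varphi$ and $u=x_j\varphi$, where $\varphi\in C^{\infty}_0(U^\delta)$ equals $1$ on $\mathrm{supp}[v]$; this is the LeJan-type transformation rule alluded to in the abstract, and the non-negative definiteness $\sum_{i,j}\xi_i\xi_j\nu_{ij}(K)\ge 0$ is forced by the semi-Dirichlet property of $\mathcal{E}$ via the Cauchy--Schwarz inequality (\ref{kkkk1}). Subtracting the diffusion part leaves a bilinear form which is first-order in $u$ and zeroth-order in $v$, and on each $U^\delta$ defines a linear functional $w\mapsto \Psi^\delta_i(w)$ in a natural way after factoring out the multiplier $v$. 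Here Assumption \ref{as1} enters decisively: it ensures that this residual functional is continuous under $C^\infty$-convergence of its argument, so that each $\Psi^\delta_i$ is a bona fide distribution (generalized function) on $U^\delta$.

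The main obstacle, as I see it, is precisely the step at which Theorem \ref{new90} reads off the drift coefficients $\nu^\delta_i$ as signed Radon measures. There, one tests against $v$ playing the role of an indicator-like bump and uses $u\in D(A)$ to make sense of the pairing pointwise; in the present semi-Dirichlet setting that argument is no longer available, since $C^{\infty}_0(U)$ need not lie in $D(A)$ and the generator may fail to act on $u$. The delicate point will be to show that, once the jumping, killing, and diffusion contributions are peeled off, what remains genuinely has the form $\sum_i\langle \Psi^\delta_i,(\partial_i u)v\rangle$ with no hidden $u v$ term and no dependence on derivatives of $u$ of order $\ge 2$; this is precisely where the locality of $\mathcal{R}$ (killing the $uv$ piece) and the matching of second-order Taylor coefficients against the already-extracted $\nu_{ij}$ (killing the higher-order pieces) must be invoked carefully, with Assumption \ref{as1} providing the continuity needed to pass to the distributional limit defining $\Psi^\delta_i$.
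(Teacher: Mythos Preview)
Your high-level target is right: with $\mathcal{R}(u,v)$ defined as $\mathcal{E}(u,v)$ minus the three integral terms, the goal is exactly to write $\mathcal{R}$ as the diffusion-plus-drift pairing, and your locality check for $\mathcal{R}$ is correct. But the central mechanism you invoke does not exist in the paper. There is no ``partition-of-unity argument that reduces the analysis of $\mathcal{R}$ to small balls'' in the proof of Theorem~\ref{new90}; the paper never Taylor-expands $u$ pointwise in the Courr\`ege style. Instead it works entirely through the resolvent approximation $\mathcal{E}(u,v)=\lim_{\beta\to\infty}\beta(u-\beta G_\beta u,v)$ and the measures $\sigma_\beta$. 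Concretely, it defines a left-strong-local form $\mathcal{E}^{c,\delta}$ as an explicit limit over $\Lambda_l$ and $\Gamma_l$ (Definition~2.3), then builds the energy measures $\mu^c_{\langle u,v\rangle}$ from the identity $2\mathcal{E}^{c,\delta}(u,uf)-\mathcal{E}^{c,\delta}(u^2,f)=\lim\beta\int_{\Lambda_l}(u(y)-u(x))^2 f(y)\,\sigma_\beta$, and defines the antisymmetric functional $L^\delta(u,v)$ by $\tfrac12(\mathcal{E}^{c,\delta}-\hat{\mathcal{E}}^{c,\delta})$. The representation then follows from the LeJan-type transformation rules (Theorems~2.6 and~2.7), which are proved by polynomial approximation of $\Phi$ and the continuity supplied by Assumption~\ref{as1}; one simply sets $\nu_{ij}:=\mu^c_{\langle x_i,x_j\rangle}$ and $\langle\Psi^\delta_i,f\rangle:=\langle L^\delta(x_i,1),f\rangle$, and formula (2.21) together with Assumption~\ref{as1} shows $\Psi^\delta_i$ is a distribution.

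Your alternative route has a genuine gap beyond the misattribution. Left strong locality of $\mathcal{R}$ does \emph{not} by itself force $\mathcal{R}$ to involve only first and second derivatives of $u$: a form such as $\int (\partial_i^2 u)\,v\,dm$ is equally left-strong-local, so ``removing the zeroth-order contribution'' via locality does not cap the order at two. The paper circumvents this not by a structural argument about locality but by the concrete resolvent identity (2.13), whose right-hand side manifestly involves only increments $(u(y)-u(x))^2$ and is bounded by $\|f\|_\infty$ times a constant via sub-Markovianity; this is what produces $\mu^c_{\langle u\rangle}$ as a Radon measure and, after the chain rule, gives the second-order part exactly. Likewise the first-order nature of the drift is not read off from a Taylor remainder but is a consequence of the transformation rule $\langle L^\delta(\Phi(u),v),f\rangle=\sum_i\langle L^\delta(u_i,v),\Phi_{x_i}(u)f\rangle$. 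If you want to salvage your outline, you would need an independent argument bounding the order of $\mathcal{R}$ in $u$; absent that, you should follow the paper's resolvent/energy-measure construction rather than a Courr\`ege-type localisation.
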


We will prove Theorems \ref{new90} and \ref{lll} in Section 2.  If
Assumption \ref{as1} is replaced by the assumption that $({\cal
E}, D({\cal E}))$ is locally controlled by Dirichlet forms, then
we can obtain a clearer characterization of the generalized
functions $\{\Psi_i^{\delta}\}_{i=1}^n$ given in Theorem \ref{lll},
see Corollary \ref{dfg} below.

In Section 3, we will apply some
ideas of Section 2 to investigate the structure of
general regular semi-Dirichlet forms. Recently, there is new
interest in further developing the theory of semi-Dirichlet forms.
For example, semi-Dirichlet forms are used to construct and study
jump-type Hunt processes (\cite{FU, SW}), the stochastic calculus
of nearly-symmetric Markov processes has been generalized to the
semi-Dirichlet form setting (\cite{MMS, O13, W}). However, the
structure of semi-Dirichlet forms is still not completely known
until now.

Let us first recall some known results on the structures of
Dirichlet forms and semi-Dirichlet forms. For notation and
terminology used in the paper, we refer to \cite{Fu94, MR92}. Suppose that $({\cal E}, D({\cal E}))$ is a regular symmetric Dirichlet
form on $L^2(E;m)$, where $E$ is a locally compact separable
metric space and $m$ is a positive Radon measure on $E$ with ${\rm
supp}[m]=E$. Recall that ``regular" implies

\noindent (i) $C_{0}(E)\cap D({\cal E})$ is dense in $D({\cal E})$ w.r.t. the $\tilde{{\cal E}}_{1}^{1/2}$-norm.

\noindent (ii) $C_{0}(E)\cap D({\cal E})$ is dense in $C_{0}(E)$ w.r.t. the uniform norm $\|\cdot\|_{\infty}$.

\noindent The Beurling-Deny formula tells us that $({\cal E},D({\cal E}))$ can be expressed for $u,v\in C_0(E)\cap D({\cal E})$ as
\begin{eqnarray}\label{BD1}
\mathcal{E}(u,v)&=&\mathcal{E}^{c}(u,v)+\int_{E\times
E\backslash
d}(u(x)-u(y))(v(x)-v(y))J(dxdy)\nonumber\\
& &+\int_{E}u(x)v(x)K(dx).
\end{eqnarray}
Here $\mathcal{E}^{c}(u,v)$ is a symmetric bilinear form with domain
$D({\cal E}^c)= C_0(E)\cap D({\cal E})$ and satisfies the strong
local property:
$$
{\cal E}^c(u,v)=0\ {\rm for}\ u\in D({\cal E}^c)\ {\rm and}\ v\in I(u),
$$
where
$$
I(u):=\{g\in D({\cal E}^c):g\ {\rm is\ constant\ on\ a\ neighbourhood\ of\ {\rm supp}}[u]\}.
$$
$J$ is a
symmetric positive Radon measure on $E\times E\backslash d$ and
$K$ is a positive Radon measure on $E$. Such ${\cal E}^c$, $J$ and
$K$ are uniquely determined by ${\cal E}$.

Furthermore, the structure of ${\cal E}^c$ is characterized by the mutual energy measures.
Let $u,v\in C_0(E)\cap D({\cal E})$. Then, there exists a unique signed Radon measure $\mu^c_{<u,v>}$ on $E$ such that
$$
\int_Efd\mu^c_{<u,v>}={\cal E}^c(uf,v)+{\cal E}^c(vf,u)-{\cal E}^c(uv,f),\ \ f\in  C_0(E)\cap D({\cal E}).
$$
We have ${\cal E}^c(u,v)=\frac{1}{2}\mu^c_{<u,v>}(E)$ and $\mu^c_{<u,v>}$ obeys LeJan's transformation rule:
$$
d\mu^c_{<\Phi(u_1,\dots,u_m),v>}=\sum_{i=1}^m\Phi_{x_i}({u_1,\dots,u_m})d\mu^c_{<u_i,v>},
$$
for any $\Phi\in C^1(\mathbb{R}^m)$ with $\Phi(0)=0$ and $u_1,\dots,u_m,v\in C_0(E)\cap D({\cal E})$.

Proofs of the above structure results on symmetric Dirichlet forms
can be found in \cite[\S 3.2]{Fu94}.  When non-symmetric Dirichlet
forms, or more generally, semi-Dirichlet forms are considered,
things become complicated. Through introducing the SPV integrable
condition, \cite{HC06} has generalized
(\ref{BD1}) to the semi-Dirichlet forms setting. Suppose that $({\cal
E}, D({\cal E}))$ is a regular semi-Dirichlet form on $L^2(E;m)$.
Then, there exist a unique positive Radon measure $J$ on $E\times
E\backslash d$ and a unique positive Radon measure $K$ on $E$ such
that for $v\in C_{0}(E)\cap D(\mathcal{E})$ and $u\in I(v)$,
\begin{eqnarray*}
\mathcal{E}(u,v)=\int_{E\times E\backslash
d}2(u(y)-u(x))v(y)J(dxdy)+\int_{E}u(x)v(x)K(dx).
\end{eqnarray*}
Define $\mathcal{A}(v):=\{f\in C_{0}(E)\cap
D(\mathcal{E}):(f(y)-f(x))v(y) \mbox{ is SPV integrable w.r.t. }
J\}$. Then, for $v\in C_{0}(E)\cap D(\mathcal{E})$ and $u\in
\mathcal{A}(v)$, we have the unique decomposition:
\begin{eqnarray}\label{BD2}
\mathcal{E}(u,v)&=&\mathcal{E}^{c}(u,v)+SPV\int_{E\times
E\backslash
d}2(u(y)-u(x))v(y)J(dxdy)\nonumber\\
& &+\int_{E}u(x)v(x)K(dx),
\end{eqnarray}
where $\mathcal{E}^{c}(u,v)$ satisfies the {\it left strong local
property} in the sense that $I(v)\subset \mathcal{A}(v)$ and
$\mathcal{E}^{c}(u,v)=0$ whenever $v\in C_0(E)\cap
D(\mathcal{E})$ and $u\in I(v)$. In general, the SPV integrable condition cannot be dropped for the decomposition (\ref{BD2}) to hold (see \cite{HMS} for an example).

\cite{HMS2,HMS} investigate the structure of non-symmetric
Dirichlet forms and characterize their diffusion parts. Suppose that
$({\cal E}, D({\cal E}))$ is a regular (non-symmetric) Dirichlet
form. Since the dual form $(\hat{\mathcal{E}},D(\mathcal{E}))$ of
$({\cal E}, D({\cal E}))$ also satisfies the semi-Dirichlet
property, we have the decomposition:
\begin{eqnarray}\label{BD3}
\hat\mathcal{E}(u,v)&=&\hat\mathcal{E}^{c}(u,v)+SPV\int_{E\times
E\backslash
d}2(u(y)-u(x))v(y)\hat J(dxdy)\nonumber\\
& &+\int_{E}u(x)v(x)\hat K(dx)
\end{eqnarray}
for $v\in C_{0}(E)\cap D(\mathcal{E})$ and $u\in
\hat\mathcal{A}(v):=\{f\in C_{0}(E)\cap
D(\mathcal{E}):(f(y)-f(x))v(y) \mbox{ is SPV integrable w.r.t. }
\hat J\}$. Note that $\hat J(dxdy)=J(dydx)$ and it can be shown that $\hat\mathcal{A}(v)=\mathcal{A}(v)$ for Dirichlet forms (cf. \cite{HMS}). Let $u,v\in C_{0}(E)\cap D(\mathcal{E})$ satisfying $(u(y)-u(x))v(y)$ is SPV integrable w.r.t. $J$. By (\ref{BD2}) and (\ref{BD3}), we get
\begin{eqnarray*}
\check{{\mathcal{E}}}(u,v)&:=&\frac{1}{2}({\mathcal{E}}(u,v)-{\mathcal{E}}(v,u))\nonumber\\
&=&\frac{1}{2}({\mathcal{E}}^c(u,v)-\hat{{\mathcal{E}}}^{c}(u,v))+SPV\int_{E\times
E\backslash d}2(u(y)-u(x))v(y)\frac{J-\hat{J}}{2}(dxdy)\nonumber\\
& &+\int_E u(x)v(x)\frac{K-\hat{K}}{2}(dx).
\end{eqnarray*}
Define $$
\check{\mathcal{E}}^c(u,v):=\frac{1}{2}({\mathcal{E}}^c(u,v)-\hat{{\mathcal{E}}}^{c}(u,v))
$$
and refer it as the \emph{co-symmetric}
diffusion part. Then, the diffusion part ${\mathcal{E}}^c$ is uniquely decomposed
into the symmetric part and the co-symmetric part as follows:
\begin{eqnarray*}
{\mathcal{E}}^c(u,v)=\tilde{{\mathcal{E}}}^c(u,v)+\check{{\mathcal{E}}}^c(u,v),
\end{eqnarray*}
where $\tilde{{\mathcal{E}}}$ denotes the symmetric part of $\mathcal{E}$ and $(\tilde{{\cal E}}, D({\cal E}))$ is a regular symmetric Dirihclet form.

Since $\tilde{\mathcal{E}}^c$ obeys LeJan's transformation rule,
to understand the structure of ${\cal E}$, we need only
concentrate on $\check{{\mathcal{E}}}^c$. In \cite{HMS},  a LeJan
type transformation rule is derived for $\check{{\mathcal{E}}}^c$
under the SPV integrable condition. This result has been used to
study Markov processes associated with non-symmetric Dirichlet
forms. For example, it plays a crucial role in investigating the
strong continuity of generalized Feynman-Kac semigroups for
nearly-symmetric Markov processes (see \cite{MS}).

In Section 3 of this paper, we will generalize the LeJan type transformation
rule of \cite{HMS} to  the semi-Dirichlet forms setting, see Theorems \ref{thm07} and \ref{mn1} below. Note that if $({\cal E},
D({\cal E}))$ is only a semi-Dirichlet form, its dual form
$(\hat{\mathcal{E}},D(\mathcal{E}))$  generally does not satisfy
the semi-Dirichlet property. So we do not have the decomposition
(\ref{BD3}). In particular, the existence of the dual killing
measure $\hat K$ is not ensured. Also, the symmetric part
$\tilde{{\mathcal{E}}}$ of $\mathcal{E}$ is only a symmetric
positivity preserving form but not a symmetric Dirichlet form,
which causes extra difficulty in characterizing the structure of
${\cal E}$.

We hope the L\'evy-Khintchine type representation and the LeJan
type transformation rule obtained in this paper can help us better
understand semi-Dirichlet forms and further their applications. We
will apply these results in a forthcoming work to consider the
strong continuity of generalized Feynman-Kac semigroups for Markov
processes associated with semi-Dirichlet forms. We refer the interested reader to \cite{AM1, CS06, C08, FK04, G94, MS} and the references therein for the topic of perturbation of Markov processes and Dirichlet forms. Finally, we would
like to point out that by quasi-homeomorphisms (cf.
\cite{CMR,HC06,Kuwae}) many results obtained in this paper can be
extended to quasi-regular semi-Dirichlet forms.

%%%%%%%%%%%%%%%%%%%%%%%%%%%%%%%%%%%%%%%%%%%%%%%%%%%%%%%%%%%%%%%%%%%%%%%%%%%%%%%%%%%%%%%%%%%%%%%%%
\section [short title]{L\'evy-Khintchine type
representation of Dirichlet generators and semi-Dirichlet forms on
open sets of $\mathbb{R}^n$} \setcounter{equation}{0}

Throughout this section, we let $U$ be an open set of
$\mathbb{R}^n$ which is equipped with the subspace topology of
$\mathbb{R}^n$ and $m$ a positive Radon measure on $U$ such that
${\rm supp}[m]=U$. We will give a L\'evy-Khintchine type
representation for Dirichlet generators and semi-Dirichlet forms on $U$. All the results of this section, except for those given in \S 2.3, apply to both of the following two cases.

\noindent Case 1: $(A,D(A))$ is a Dirichlet operator on
$L^2(U;m)$ and is the generator of a strongly continuous
contraction semigroup on $L^2(U;m)$. We assume that
$C^{\infty}_{0}(U)\subset D(A)\cap D(\hat A)$ and define the bilinear form ${\cal E}$ as in
(\ref{cvbn}).

\noindent Case 2: $({\cal E}, D({\cal E}))$ is a semi-Dirichlet
form on $L^2(U;m)$ such that $C^{\infty}_0(U)\subset D({\cal E})$
and Assumption \ref{as1} holds.

Let $J$ be the jumping measure given in Lemma \ref{zxcv22}. We
choose a sequence of relatively compact open sets
$\Omega_l\uparrow U$ and a sequence of numbers
$\varsigma_l\downarrow 0$ such that the set $\Gamma_l:=\{(x,y)\in
\Omega_l\times \Omega_l:|x-y|\ge \varsigma_l\}$ is a continuous
set w.r.t. $J$ for every $l\in \mathbb{N}$. Hereafter when we say
that a set $B$ is a relatively compact set of an open set $V$ of
$\mathbb{R}^n$, we mean that $B\subset V$ and $B$ is relatively
compact w.r.t. the subspace topology of $V$ inherited from
$\mathbb{R}^n$. Denote $\Lambda_l:=\{(x,y)\in \Omega_l\times
\Omega_l:|x-y|<\varsigma_l\}$. Define $\hat{\cal E}(u,v):={\cal E}(v,u)$ for $u,v\in C^{\infty}_{0}(U)$.

\subsection{Decomposition of ${\cal E}$}

\begin{lem}\label{l1} Let $u,v\in C^{\infty}_{0}(U)$ and $F$ be a compact set of $U$. Then

\noindent (i) $$ \int_{U\times F\backslash
d}(u(y)-u(x))^{2}J(dxdy)<\infty.$$

\noindent (ii)
$$\int_{F\times F\setminus d}|x-y|^{2}J(dxdy)<\infty.$$

\noindent (iii) For $\varepsilon>0$,
$$
\int_{(U\times U)\cap\{
|x-y|>\varepsilon\}}|(u(y)-u(x))v(y)|J(dxdy)<\infty.
$$
\end{lem}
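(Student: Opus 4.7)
Plan. I would prove (i) as the main technical step and deduce (ii) and (iii) from it by elementary reductions. For (i), fix $\psi\in C_{0}^{\infty}(U)$ with $\psi\equiv 1$ on $F$, set $\mathcal{E}^{(\beta)}(f,g):=\beta(f-\beta G_{\beta}f,g)$ (so $\mathcal{E}^{(\beta)}(f,g)\to\mathcal{E}(f,g)$ as $\beta\to\infty$ for $f,g\in C_{0}^{\infty}(U)$ in either Case 1 or Case 2), and expand $(u(y)-u(x))^{2}=u(x)^{2}-2u(x)u(y)+u(y)^{2}$ inside $\int\psi(y)\sigma_{\beta}(dxdy)$. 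Using the defining formula (\ref{E1}) for $\sigma_{\beta}$ together with the pointwise second-marginal identity $\int g(y)\sigma_{\beta}(dxdy)=\int g\cdot\beta G_{\beta}1\,dm$ (valid by monotone approximation even though $1\notin L^{2}(m)$, because $\beta G_{\beta}$ is sub-Markovian on $L^{\infty}$), I arrive at
\begin{eqnarray*}
\beta\!\int(u(y)-u(x))^{2}\psi(y)\sigma_{\beta}(dxdy) & = & -\mathcal{E}^{(\beta)}(u^{2},\psi)+2\mathcal{E}^{(\beta)}(u,u\psi)\\
 &  & +\,\beta\!\int u^{2}\psi(\beta G_{\beta}1-1)\,dm.
\end{eqnarray*}
The last term is non-positive since $\beta G_{\beta}1\leq 1$, while the first two converge to the finite limit $-\mathcal{E}(u^{2},\psi)+2\mathcal{E}(u,u\psi)$ as $\beta\to\infty$ because $u^{2},u\psi,\psi\in C_{0}^{\infty}(U)$. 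Hence $(\beta/2)\int(u(y)-u(x))^{2}\psi(y)\sigma_{\beta}(dxdy)\leq C$ uniformly in $\beta$. Approximating the non-negative continuous integrand from below by continuous functions with compact support in $U\times U\setminus d$, applying Lemma \ref{zxcv22}(iii) to each, and taking the supremum via inner regularity of the Radon measure $J$, I obtain $\int(u(y)-u(x))^{2}\psi(y)J(dxdy)\leq C$; (i) follows since $\psi\geq I_{F}$.

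For (ii), pick $u_{i}\in C_{0}^{\infty}(U)$ satisfying $u_{i}(x)=x_{i}$ on an open neighbourhood of $F$, so that on $F\times F$ one has $|x-y|^{2}=\sum_{i=1}^{n}(u_{i}(y)-u_{i}(x))^{2}$; the claim then follows from (i) applied to each $u_{i}$, since $F\times F\setminus d\subset U\times F\setminus d$. For (iii), the integrand vanishes on $\{y\notin{\rm supp}[v]\}$. On $\{|x-y|>\varepsilon\}\cap(K\times{\rm supp}[v])$ with $K\subset U$ compact, the region is relatively compact in $U\times U\setminus d$, hence $J$ is finite there and the integrand is bounded. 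On the complementary tail $\{x\in U\setminus K\}\times{\rm supp}[v]$ with $K\supset{\rm supp}[u]$, one has $u(x)\equiv 0$, the integrand is bounded by $\|u\|_{\infty}\|v\|_{\infty}$, and $J((U\setminus K)\times{\rm supp}[v])<\infty$ by Lemma \ref{zxcv22}(ii) applied with a cut-off $u_{0}\in C_{0}^{\infty}(U)$ equal to $1$ on a neighbourhood of $K\cup{\rm supp}[v]$ and a bump majorising $I_{{\rm supp}[v]}$.

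The main obstacle is the uniform-in-$\beta$ estimate in (i): no Beurling-Deny decomposition is yet available, so the jump-type integral cannot be bounded a priori in terms of $\mathcal{E}^{(\beta)}(u,u)$. The resolution is the displayed identity above, whose derivation requires keeping the cut-off $\psi(y)$ inside the integrand throughout; this ensures every resulting term is a resolvent pairing of functions in $C_{0}^{\infty}(U)\subset D(A)$ (respectively $D(\mathcal{E})$) and thereby sidesteps the possibly unbounded first-marginal measures of $\sigma_{\beta}$ that would arise if one naively symmetrised without a test function.
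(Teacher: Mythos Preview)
Your argument is correct and, for parts (i) and (ii), coincides with the paper's proof: the paper also fixes a nonnegative cut-off $w$ with $w\equiv 1$ on $F$, expands $(u(y)-u(x))^{2}w(y)$ against $\sigma_{\beta}$, uses $\beta G_{\beta}1\leq 1$ to drop the ``constant'' term, and passes to $J$ via vague convergence; part (ii) is deduced from (i) by the same coordinate cut-offs $u_{i}(x)=x_{i}w'(x)$. (One cosmetic point: you should state $\psi\geq 0$, since both the sign of the $\beta G_{\beta}1-1$ term and the final inequality $\psi\geq I_{F}$ rely on it.)

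For part (iii) your route differs from the paper's. You split the domain into a compact piece $K\times{\rm supp}[v]$ (where $J$ is a finite Radon measure away from the diagonal) and a tail $(U\setminus K)\times{\rm supp}[v]$, bounding the latter via the killing-jump identity of Lemma~\ref{zxcv22}(ii). This works, provided you choose $K$ \emph{after} fixing the cut-off $u_{0}$: take $K={\rm supp}[u_{0}]$, so that $1-u_{0}\equiv 1$ on $U\setminus K$ and (1.5) indeed yields $J((U\setminus K)\times{\rm supp}[v])<\infty$. The paper instead writes $v(y)=(v(y)-v(x))+v(x)$, applies Cauchy--Schwarz and (i) to the first piece, and observes that in the second piece $x\in{\rm supp}[v]$ forces the domain to be relatively compact in $U\times U\setminus d$. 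The paper's argument is shorter and stays entirely within (i), whereas yours brings in the extra input (1.5); both are valid and elementary.
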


 \begin{proof}\ \ (i) We choose a $w\in C^{\infty}_{0}(U)$ satisfying $w\ge 0$ and $w|_{F}\equiv 1$. By (\ref{E1}) and the sub-Markovian property of $(G_{\beta})_{\beta>0}$, we get
\begin{eqnarray*}
\int_{U\times F\setminus d}(u(y)-u(x))^{2}J(dxdy)&\le&\int_{U\times U\setminus d}(u(y)-u(x))^{2}w(y)J(dxdy)\nonumber\\
&=&\lim_{l\rightarrow\infty}\int_{\Gamma_l}(u(y)-u(x))^{2}w(y)J(dxdy)\nonumber\\
&=&\lim_{l\rightarrow\infty}\lim_{\beta\rightarrow\infty}\frac{\beta}{2}\int_{\Gamma_l}(u(y)-u(x))^{2}w(y)\sigma_{\beta}(dxdy)\nonumber\\
&\le&\lim_{\beta\rightarrow\infty}\frac{\beta}{2}\int_{U\times U}(u(y)-u(x))^{2}w(y)\sigma_{\beta}(dxdy)\nonumber\\
&=&\lim_{\beta\rightarrow\infty}\frac{\beta}{2}\{(\beta G_{\beta}I_U,u^2w)-2(\beta G_{\beta}u,uw)+(\beta G_{\beta}u^2,w)\}\nonumber\\
&\le&\lim_{\beta\rightarrow\infty}\left\{\beta(u-\beta G_{\beta}u,uw)-\frac{\beta}{2}(u^2-\beta G_{\beta}u^2,w)\right\}\nonumber\\
&=&{\cal E}(u,uw)-\frac{1}{2}{\cal E}(u^2,w)\nonumber\\
&<&\infty.
\end{eqnarray*}

(ii) We choose a $w'\in C^{\infty}_{0}(U)$ satisfying
$w'|_{F}\equiv 1$. For $1\le i \le n$, we define $u_i(x)=x_i\cdot
w'(x)$ for $x=(x_1,\dots,x_n)\in U$. Then, $u_i\in
C^{\infty}_0(U)$ satisfying $u_i(x)=x_i$ for $x\in F$. By (i), we
get
\begin{eqnarray*}
\int_{F\times F\setminus d}|x-y|^{2}J(dxdy)&=&\sum_{i=1}^{n}\int_{F\times F\setminus d}(x_{i}-y_{i})^{2}J(dxdy)\\
&=&\sum_{i=1}^{n}\int_{F\times F\setminus d}(u_{i}(x)-u_{i}(y))^{2}J(dxdy)\\
&<&\infty.
\end{eqnarray*}

(iii) By (i), we get
\begin{eqnarray*}
& &\int_{(U\times U)\cap\{ |x-y|>\varepsilon\}}|(u(y)-u(x))v(y)|J(dxdy)\\
&=&\int_{(U\times {\rm supp}[v])\cap\{|x-y|>\varepsilon\}}|(u(y)-u(x))v(y)|J(dxdy)\\
&=&\int_{(U\times {\rm supp}[v])\cap\{|x-y|>\varepsilon\}}|(u(y)-u(x))(v(y)-v(x))+(u(y)-u(x))v(x)|J(dxdy)\\
&\leq&\int_{U\times {\rm supp}[v]\setminus d}|(u(y)-u(x))(v(y)-v(x))|J(dxdy)\\
& &+\int_{({\rm supp}[v]\times {\rm supp}[v])\cap\{|x-y|>\varepsilon\}}|(u(y)-u(x))v(x)|J(dxdy)\\
&\leq&\left(\int_{U\times {\rm supp}[v]\setminus d}(u(y)-u(x))^{2}J(dxdy)\right)^{1/2}\left(\int_{U\times {\rm supp}[v]\setminus d}(v(y)-v(x))^{2}J(dxdy)\right)^{1/2}\\
&&+2\|u\|_{\infty}\|v\|_{\infty}J(({\rm supp}[v]\times {\rm supp}[v])\cap \{|x-y|>\varepsilon\})\\
&<&\infty.
\end{eqnarray*}
\end{proof}

Let $\delta>0$ be a constant such that $U^{\delta}\not=\emptyset$.
Suppose that $u,v\in C^{\infty}_{0}(U^{\delta})$. Let $\chi\in
C^{\infty}_{0}(U)$ satisfying $\chi=1$ on a neighbourhood of ${\rm
supp}[u]\cup{\rm supp}[v]$.  By Taylor's theorem and Lemma
\ref{l1}(ii), one finds that
$(u(y)-u(x)-\sum_{i=1}^{n}(y_{i}-x_{i})\frac{\partial u}{\partial
y_{i}}(y)I_{\{|x-y|\le\delta\}}(x,y))v(y)\chi(x)$ is integrable
w.r.t. both $J$ and $\hat J$. Hereafter, we define
\begin{equation}\label{eqa}
F^{\delta}_v:=\left\{x\in U:\inf_{y\in {\rm supp}[v]}|x-y|\le\delta\right\}.
\end{equation}
$F^{\delta}_v$ is a compact set of $U$. By Lemma \ref{l1}(i) and (ii), for $1\le i\le n$, we have
\begin{eqnarray*}
& &\int_{U\times U\backslash d}\left|(y_{i}-x_{i})\frac{\partial u}{\partial y_{i}}(y)I_{\{|x-y|\le\delta\}}(x,y)v(y)(1-\chi(x))\right|(J(dxdy)+\hat J(dxdy))\nonumber\\
&\le&2\left\|\frac{\partial u}{\partial y_{i}}\cdot v\right\|_{\infty}\left(\int_{F^{\delta}_v\times F^{\delta}_v\backslash d}|x-y|^2J(dxdy)\right)^{1/2}\left(\int_{F^{\delta}_v\times F^{\delta}_v\backslash d}(\chi(y)-\chi(x))^2J(dxdy)\right)^{1/2}\nonumber\\
&<&\infty.
\end{eqnarray*}
Hence $\sum_{i=1}^n(y_{i}-x_{i})\frac{\partial u}{\partial
y_{i}}(y)I_{\{|x-y|\le\delta\}}(x,y)v(y)(1-\chi(x))$ is integrable
w.r.t. both $J$ and $\hat J$. Therefore,
$$\left((u(y)-u(x))\chi(x)-\sum_{i=1}^{n}(y_{i}-x_{i})\frac{\partial
u}{\partial y_{i}}(y)I_{\{|x-y|\le\delta\}}(x,y)\right)v(y)$$ is
integrable w.r.t. both $J$ and $\hat J$.

We assume temporarily that $J(\{(x,y)\in U\times
U:|x-y|=\delta\})=0$. Then, we obtain by the vague convergence of
$(\beta/2)\sigma_{\beta}$ to $J$ that
\begin{eqnarray}\label{add2}
& &{\mathcal{E}}(u,v)=\lim_{\beta\rightarrow\infty}\beta(u-\beta {G}_{\beta}u,v)\nonumber\\
&=&\lim_{\beta\rightarrow\infty}\beta\left\{\int_{U\times U}(u(y)-u(x))v(y)\chi(x){\sigma}_{\beta}(dxdy)\right.\nonumber\\
& &\ \ \ \ +\left.\int_{U}\chi(x)u(x)v(x)m(dx)-\int_{U\times U}\chi(x)u(y)v(y){\sigma}_{\beta}(dxdy)\right\}\nonumber\\
&=&\lim_{\beta\rightarrow\infty}\beta\int_{U\times U}(u(y)-u(x))v(y)\chi(x){\sigma}_{\beta}(dxdy)+{\cal E}(\chi,uv)\nonumber\\
&=&\lim_{l\rightarrow\infty}\lim_{\beta\rightarrow\infty}\beta\left\{\int_{\Lambda_l}(u(y)-u(x))v(y)\sigma_{\beta}(dxdy)\right.\nonumber\\
& &\ \ \ \ \left.+\int_{\Gamma_l}\left((u(y)-u(x))\chi(x)-\sum_{i=1}^{n}(y_{i}-x_{i})\frac{\partial u}{\partial y_{i}}(y)I_{\{|x-y|\le\delta\}}(x,y)\right)v(y)\sigma_{\beta}(dxdy)\right.\nonumber\\
&&\ \ \ \
+\left.\int_{\Gamma_l}\sum_{i=1}^{n}(y_{i}-x_{i})\frac{\partial
u}{\partial
y_{i}}(y)I_{\{|x-y|\le\delta\}}(x,y)v(y){\sigma}_{\beta}(dxdy)\right\}
+{\cal E}(\chi,uv)\nonumber\\
&=&\lim_{l\rightarrow\infty}\lim_{\beta\rightarrow\infty}\beta\left\{\int_{\Lambda_l}(u(y)-u(x))v(y)\sigma_{\beta}(dxdy)\right.\nonumber\\
& & \ \ \ \ +\left.\int_{\Gamma_l}\sum_{i=1}^{n}(y_{i}-x_{i})\frac{\partial u}{\partial y_{i}}(y)I_{\{|x-y|\le\delta\}}(x,y)v(y){\sigma}_{\beta}(dxdy)\right\}\nonumber\\
& &+\int_{U\times U\backslash d}2\left((u(y)-u(x))\chi(x)-\sum_{i=1}^{n}(y_{i}-x_{i})\frac{\partial u}{\partial y_{i}}(y)I_{\{|x-y|\le\delta\}}(x,y)\right)v(y)J(dxdy)\nonumber\\
& &+{\cal E}(\chi,uv).
\end{eqnarray}
Similarly, we get
\begin{eqnarray}\label{add3}
& &\hat{\mathcal{E}}(u,v)=\lim_{l\rightarrow\infty}\lim_{\beta\rightarrow\infty}\beta
\left\{\int_{\Lambda_l}(u(y)-u(x))v(y)\hat\sigma_{\beta}(dxdy)\right.\nonumber\\
& &\ \ \ \ \ \ \ \ \ \ \ \ \ \ \ \ \ +\left.\int_{\Gamma_l}\sum_{i=1}^{n}(y_{i}-x_{i})\frac{\partial u}{\partial y_{i}}(y)I_{\{|x-y|\le\delta\}}(x,y)v(y){\hat\sigma}_{\beta}(dxdy)\right\}\nonumber\\
& &+\int_{U\times U\backslash d}2\left((u(y)-u(x))\chi(x)-\sum_{i=1}^{n}(y_{i}-x_{i})\frac{\partial u}{\partial y_{i}}(y)I_{\{|x-y|\le\delta\}}(x,y)\right)v(y)\hat J(dxdy)\nonumber\\
& &+\hat{\cal E}(\chi,uv).
\end{eqnarray}

By (\ref{add2}) and (\ref{add3}), we can introduce the following
definition.
\begin{defin}\label{def1} Let $\{\delta_n\}_{n=1}^{\infty}$ be a sequence of constants satisfying $\delta=\lim_{n\rightarrow\infty}\delta_n$, $\delta_n\ge \delta$ and $J(\{(x,y)\in U\times
U:|x-y|=\delta_n\})=0$ for each $n\in \mathbb{N}$. For $u,v\in
C^{\infty}_{0}(U^{\delta})$, we define
\begin{eqnarray}\label{dd11}
& &{\mathcal{E}}^{c,\delta}(u,v)
:=\lim_{n\rightarrow\infty}\lim_{l\rightarrow\infty}\lim_{\beta\rightarrow\infty}\beta\left\{\int_{\Lambda_l}(u(y)-u(x))v(y)\sigma_{\beta}(dxdy)\right.\nonumber\\
& & \ \ \ \ \ \ \ \  \ \ \ \ \ \ \ \
+\left.\int_{\Gamma_l}\sum_{i=1}^{n}(y_{i}-x_{i})\frac{\partial
u}{\partial
y_{i}}(y)I_{\{|x-y|\le\delta_n\}}(x,y)v(y){\sigma}_{\beta}(dxdy)\right\}\
\ \ \
\end{eqnarray}
and
\begin{eqnarray}\label{dd22}
&
&\hat{\mathcal{E}}^{c,\delta}(u,v):=\lim_{n\rightarrow\infty}\lim_{l\rightarrow\infty}\lim_{\beta\rightarrow\infty}\beta\left\{\int_{\Lambda_l}
(u(y)-u(x))v(y)\hat\sigma_{\beta}(dxdy)\right.\nonumber\\
& & \ \ \ \ \ \ \ \  \ \ \ \ \ \ \ \
+\left.\int_{\Gamma_l}\sum_{i=1}^{n}(y_{i}-x_{i})\frac{\partial
u}{\partial
y_{i}}(y)I_{\{|x-y|\le\delta_n\}}(x,y)v(y){\hat\sigma}_{\beta}(dxdy)\right\}.\
\ \ \
\end{eqnarray}
\end{defin}

By (\ref{add2}), (\ref{add3}) and the fact that $J$ is a positive
Radon measure $J$ on $U\times U\backslash d$, one finds that the
definitions of ${\mathcal{E}}^{c,\delta}$ and
$\hat{\mathcal{E}}^{c,\delta}$ are independent of the selections
of $\{\Omega_l\}$ and $\{\delta_n\}$. Both
$\mathcal{E}^{c,\delta}(u,v)$ and
$\hat\mathcal{E}^{c,\delta}(u,v)$ satisfy the left strong local
property in the sense that
${\mathcal{E}}^{c,\delta}(u,v)=\hat{\mathcal{E}}^{c,\delta}(u,v)=0$
whenever $u$ is constant on a neighbourhood of  ${\rm supp}[v]$.

\begin{thm} Suppose $u,v\in C^{\infty}_{0}(U^{\delta})$.

\noindent (i) We have the decomposition
\begin{eqnarray}\label{E102}
& &\mathcal{E}(u,v)={\mathcal{E}}^{c,\delta}(u,v)\nonumber\\
& &\ \ \ \ +\int_{U\times U\backslash d}2\left(u(y)-u(x)-\sum_{i=1}^{n}(y_{i}-x_{i})\frac{\partial u}{\partial y_{i}}(y)I_{\{|x-y|\le\delta\}}(x,y)\right)v(y)J(dxdy)\nonumber\\
& &\ \ \ \ +\int_Uu(x)v(x)K(dx).
\end{eqnarray}

\noindent (ii) Let $\chi\in C^{\infty}_{0}(U)$ satisfying $\chi=1$ on a neighbourhood of  ${\rm supp}[u]\cup{\rm supp}[v]$. Then, we have
\begin{eqnarray}\label{znew}
& &\mathcal{E}(u,v)={\mathcal{E}}^{c,\delta}(u,v)\nonumber\\
& &\ \ \ \ +\int_{U\times U\backslash d}2\left((u(y)-u(x))\chi(x)-\sum_{i=1}^{n}(y_{i}-x_{i})\frac{\partial u}{\partial y_{i}}(y)I_{\{|x-y|\le\delta\}}(x,y)\right)v(y)J(dxdy)\nonumber\\
& &\ \ \ \ +{\cal E}(\chi,uv)
\end{eqnarray}
and
\begin{eqnarray}\label{z1}
& &\hat\mathcal{E}(u,v)=\hat{\mathcal{E}}^{c,\delta}(u,v)\nonumber\\
& &\ \ \ \ +\int_{U\times U\backslash d}2\left((u(y)-u(x))\chi(x)-\sum_{i=1}^{n}(y_{i}-x_{i})\frac{\partial u}{\partial y_{i}}(y)I_{\{|x-y|\le\delta\}}(x,y)\right)v(y)\hat J(dxdy)\nonumber\\
& &\ \ \ \ +\hat {\cal E}(\chi,uv).
\end{eqnarray}
\end{thm}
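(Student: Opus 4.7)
The plan is to establish (\ref{znew}) and (\ref{z1}) by extending the identities (\ref{add2}) and (\ref{add3})---which were derived under the temporary hypothesis $J(\{(x,y):|x-y|=\delta\})=0$---to general $\delta$ via the approximation in Definition \ref{def1}; then (\ref{E102}) follows from (\ref{znew}) by an algebraic manipulation that uses Lemma \ref{zxcv22}(ii) to compute $\mathcal{E}(\chi,uv)$.

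Fix $u,v\in C^{\infty}_{0}(U^\delta)$ and $\chi\in C^{\infty}_{0}(U)$ with $\chi\equiv 1$ on a neighbourhood of $\mathrm{supp}[u]\cup\mathrm{supp}[v]$. Since $\mathrm{supp}[v]\subset U^\delta$ and $\delta_n\downarrow\delta$, I discard finitely many $n$ so that $\delta_n<\mathrm{dist}(\mathrm{supp}[v],\partial U)$, which keeps $F^{\delta_n}_v$ compact in $U$. The derivation leading to (\ref{add2}) then applies verbatim with $\delta$ replaced by $\delta_n$---the only properties of $\delta$ invoked there are compactness of $F^{\delta}_v$ and $J$ not charging $\{|x-y|=\delta\}$, both of which hold for $\delta_n$---yielding
\begin{eqnarray*}
\mathcal{E}(u,v)=M_n+R_n+\mathcal{E}(\chi,uv),
\end{eqnarray*}
where $M_n$ denotes the inner iterated $l,\beta$ limit appearing in (\ref{dd11}) with cutoff $\delta_n$, and $R_n$ denotes the jump integral of (\ref{znew}) with $I_{\{|x-y|\le\delta_n\}}$ in place of $I_{\{|x-y|\le\delta\}}$.

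Now I send $n\to\infty$. The integrand of $R_n$ converges pointwise to that of $R$ (the $\delta$-version), and $R_n-R$ is supported on $\{\delta<|x-y|\le\delta_n\}$. A uniform-in-$n$ majorant is $C|x-y|I_{\{\delta<|x-y|\le\delta_1\}}I_{\{y\in\mathrm{supp}[v]\}}$; on $\{|x-y|\ge\delta\}$ this is bounded by $C\delta^{-1}|x-y|^2$, which is $J$-integrable on $F^{\delta_1}_v\times F^{\delta_1}_v\setminus d$ by Lemma \ref{l1}(ii). Dominated convergence yields $R_n\to R$, forcing $M_n$ to converge to $\mathcal{E}(u,v)-R-\mathcal{E}(\chi,uv)$. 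By Definition \ref{def1} this common limit is $\mathcal{E}^{c,\delta}(u,v)$, which simultaneously confirms (\ref{znew}) and shows that $\mathcal{E}^{c,\delta}$ is independent of the choices of $\{\Omega_l\}$ and $\{\delta_n\}$. Replacing $\sigma_\beta,J$ by $\hat\sigma_\beta,\hat J$ and starting from (\ref{add3}) gives (\ref{z1}).

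For (\ref{E102}), I apply Lemma \ref{zxcv22}(ii) to $\mathcal{E}(\chi,uv)$---legitimate because $\chi$ is constant on a neighbourhood of $\mathrm{supp}[uv]$---to obtain
\begin{eqnarray*}
\mathcal{E}(\chi,uv)&=&\int_{U\times U\setminus d}2(\chi(y)-\chi(x))u(y)v(y)J(dxdy)+\int_U\chi(x)u(x)v(x)K(dx)\\
&=&\int_{U\times U\setminus d}2(1-\chi(x))u(y)v(y)J(dxdy)+\int_U u(x)v(x)K(dx),
\end{eqnarray*}
using $\chi(y)u(y)v(y)=u(y)v(y)$ and $\chi(x)u(x)v(x)=u(x)v(x)$. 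Substituting into (\ref{znew}) and combining the two $J$-integrals, $(u(y)-u(x))\chi(x)+(1-\chi(x))u(y)=u(y)-u(x)\chi(x)=u(y)-u(x)$ (since $\chi\equiv 1$ on $\mathrm{supp}[u]$), which produces precisely the integrand of (\ref{E102}). The main obstacle is the dominated-convergence step controlling $R_n\to R$, which crucially exploits the bound $|x-y|\le|x-y|^2/\delta$ valid on $\{|x-y|\ge\delta\}$; the remainder is algebraic bookkeeping.
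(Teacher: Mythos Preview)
Your proof is correct and follows essentially the same route as the paper. The paper dispatches part (ii) in one line as ``a direct consequence of (\ref{add2})--(\ref{dd22}),'' whereas you have spelled out the $\delta_n\downarrow\delta$ dominated-convergence argument that underlies that sentence; your derivation of (\ref{E102}) from (\ref{znew}) via Lemma \ref{zxcv22}(ii) and the algebraic identity $(u(y)-u(x))\chi(x)+(1-\chi(x))u(y)=u(y)-u(x)$ matches the paper's (\ref{E111}) verbatim.
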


\begin{proof}\ \ (ii) is a direct consequence of (\ref{add2})--(\ref{dd22}). We only prove
(i). By (\ref{mnb2}), we have
\begin{equation}\label{E111}
\mathcal{E}(\chi, uv)=\int_{U\times U\backslash
d}2(1-\chi(x))u(y)v(y)J(dxdy)+\int_{U}u(x)v(x)K(dx).
\end{equation}
Here the integrability of $(1-\chi(x))u(y)v(y)$ w.r.t. $J$ is also
ensured by Lemma \ref{l1}(iii).  Then, we obtain (\ref{E102}) by
(\ref{znew}) and (\ref{E111}).
\end{proof}

By (\ref{E102}), to understand the structure of ${\cal E}$, we may
concentrate on the left strong local part ${\cal E}^{c,\delta}$.

Suppose that $u,f\in C_{0}^{\infty}(U^{\delta})$. By (\ref{dd11}),
we get
\begin{equation}\label{kl13}
2{\mathcal{E}}^{c,\delta}(u,uf)-{\mathcal{E}}^{c,\delta}(u^{2},f)=\lim_{l\rightarrow\infty}\lim_{\beta\rightarrow\infty}\beta\int_{\Lambda_l}(u(y)-u(x))^2f(y)\sigma_{\beta}(dxdy).
\end{equation}
Since $\delta$ is arbitrary,
$\lim_{l\rightarrow\infty}\lim_{\beta\rightarrow\infty}\beta\int_{\Lambda_l}(\varphi(y)-\varphi(x))^2g(y)\sigma_{\beta}(dxdy)$
exists for any $\varphi,g\in C_{0}^{\infty}(U)$.

 Let $\varphi\in C_{0}^{\infty}(U)$. For
$r\in\mathbb{N}$, we choose a $w\in C^{\infty}_{0}(U)$ satisfying
$w\ge 0$ and $w|_{\Omega_r}\equiv 1$. For $g\in
C_{0}^{\infty}(\Omega_r)$, we obtain by the sub-Markovian property
of $(G_{\beta})_{\beta>0}$ that
\begin{eqnarray*}
&&\left|\lim_{l\rightarrow\infty}\lim_{\beta\rightarrow\infty}\beta\int_{\Lambda_l}(\varphi(y)-\varphi(x))^2g(y)\sigma_{\beta}(dxdy)\right|\\
&\le&\|g\|_{\infty}\lim_{\beta\rightarrow\infty}\beta\int_{U\times U}(\varphi(y)-\varphi(x))^2w(y)\sigma_{\beta}(dxdy)\\
&\le&\|g\|_{\infty}\lim_{\beta\rightarrow\infty}\{2\beta(\varphi-\beta G_{\beta}\varphi,\varphi w)-\beta(\varphi^2-\beta G_{\beta}\varphi^2,w)\}\\
&=&(2{\cal E}(\varphi,\varphi w)-{\cal
E}(\varphi^2,w))\|g\|_{\infty}.
\end{eqnarray*}
Then, there exists a unique Radon measure $\mu_{<\varphi>}^{r,c}$
on $\Omega_r$ such that
$$
\int_{\Omega_r}gd\mu_{<\varphi>}^{r,c}=\lim_{l\rightarrow\infty}\lim_{\beta\rightarrow\infty}\beta\int_{\Lambda_l}(\varphi(y)-\varphi(x))^2g(y)\sigma_{\beta}(dxdy),\
\ \forall g\in C_{0}^{\infty}(\Omega_r).
$$
It is easy to see that $\{\mu_{<\varphi>}^{r,c}\}$ is a consistent
sequence of Radon measures. Therefore, we can well define the
measure $\mu^c_{<\varphi>}$ by
$\mu^c_{<\varphi>}=\mu^{r,c}_{<\varphi>}$ on $\Omega_r$, which
satisfies
$$
\int_{U}gd\mu_{<\varphi>}^{c}=\lim_{l\rightarrow\infty}\lim_{\beta\rightarrow\infty}\beta\int_{\Lambda_l}(\varphi(y)-\varphi(x))^2g(y)\sigma_{\beta}(dxdy),\
\ \forall g\in C_{0}^{\infty}(U).
$$

For $\varphi,\phi\in C_{0}^{\infty}(U)$, we define
$$
\mu_{<\varphi,\phi>}^c:=\frac{1}{2}(\mu_{<\varphi+\phi>}^c-\mu_{<\varphi>}^c-\mu_{<\phi>}^c).
$$
Then, for any $g\in C_{0}^{\infty}(U)$, we have
\begin{equation}\label{hgfv}
\int_{U}gd\mu_{<\varphi,\phi>}^c=\lim_{l\rightarrow\infty}\lim_{\beta\rightarrow\infty}\beta\int_{\Lambda_l}(\varphi(y)-\varphi(x))(\phi(y)-\phi(x))g(y)
\sigma_{\beta}(dxdy).
\end{equation}

Suppose now that $u,v,f\in C_{0}^{\infty}(U^{\delta})$. We obtain
by (\ref{kl13}) and (\ref{hgfv}) that
\begin{equation}\label{ee1}
\int_{U}fd\mu_{<u,v>}^c={\mathcal{E}}^{c,\delta}(u,vf)+{\mathcal{E}}^{c,\delta}(v,uf)-{\mathcal{E}}^{c,\delta}(uv,f).
\end{equation}
Hence, for any $h\in C_{0}^{\infty}(U^{\delta})$ satisfying $h|_{{\rm supp}[u]\cup{\rm supp}[v]}\equiv1$, we have
\begin{equation}\label{eee0}
{\mathcal{E}}^{c,\delta}(u,v)+{\mathcal{E}}^{c,\delta}(v,u)=\int_{U}hd\mu_{<u,v>}^c+{\mathcal{E}}^{c,\delta}(uv,h).
\end{equation}

For $u,v\in C_{0}^{\infty}(U^{\delta})$, we define a linear
functional $L^{\delta}(u,v)$ on $C_{0}^{\infty}(U^{\delta})$ by
\begin{equation}\label{e01}<L^{\delta}(u,v),f>:=\frac{1}{2}({\mathcal{E}}^{c,\delta}(u,vf)-\hat{\mathcal{E}}^{c,\delta}(u,vf)),\ \ f\in C_{0}^{\infty}(U^{\delta}).
\end{equation}
Then, for any $h\in C_{0}^{\infty}(U^{\delta})$ satisfying
$h|_{{\rm supp}[v]}\equiv1$, we have
\begin{equation}\label{eee1}
{\mathcal{E}}^{c,\delta}(u,v)-\hat{\mathcal{E}}^{c,\delta}(u,v)=2<L^{\delta}(u,v),h>.
\end{equation}

Let $\chi\in C^{\infty}_{0}(U^{\delta})$ satisfying $\chi=1$ on a
neighbourhood of  ${\rm supp}[u]\cup{\rm supp}[v]$. Then, we
obtain by (\ref{E102}), (\ref{z1}) and (\ref{eee1}) that
\begin{eqnarray}\label{eee44}
& &{\mathcal{E}}^{c,\delta}(u,v)-{\mathcal{E}}^{c,\delta}(v,u)\nonumber\\
&=&{\mathcal{E}}^{c,\delta}(u,v)-\mathcal{E}(v,u)+\int_Uu(x)v(x)K(dx)\nonumber\\
&&+\int_{U\times U\backslash d}2\left(v(y)-v(x)-\sum_{i=1}^{n}(y_{i}-x_{i})\frac{\partial v}{\partial y_{i}}(y)I_{\{|x-y|\le\delta\}}(x,y)\right)u(y)J(dxdy)\nonumber\\
&=&{\mathcal{E}}^{c,\delta}(u,v)-\hat{\mathcal{E}}^{c,\delta}(u,v)-\hat {\cal E}(\chi,uv)+\int_Uu(x)v(x)K(dx)\nonumber\\
&&-\int_{U\times U\backslash d}2\left((u(y)-u(x))\chi(x)-\sum_{i=1}^{n}(y_{i}-x_{i})\frac{\partial u}{\partial y_{i}}(y)I_{\{|x-y|\le\delta\}}(x,y)\right)v(y)\hat J(dxdy)\nonumber\\
&&+\int_{U\times U\backslash d}2\left(v(y)-v(x)-\sum_{i=1}^{n}(y_{i}-x_{i})\frac{\partial v}{\partial y_{i}}(y)I_{\{|x-y|\le\delta\}}(x,y)\right)u(y)J(dxdy)\nonumber\\
&=&2<L^{\delta}(u,v),\chi>-{\cal E}(uv,\chi)+\int_Uu(x)v(x)K(dx)\nonumber\\
&&-\int_{U\times U\backslash d}2\left((u(y)-u(x))\chi(x)-\sum_{i=1}^{n}(y_{i}-x_{i})\frac{\partial u}{\partial y_{i}}(y)I_{\{|x-y|\le\delta\}}(x,y)\right)v(y)\hat J(dxdy)\nonumber\\
&&+\int_{U\times U\backslash d}2\left(v(y)-v(x)-\sum_{i=1}^{n}(y_{i}-x_{i})\frac{\partial v}{\partial y_{i}}(y)I_{\{|x-y|\le\delta\}}(x,y)\right)u(y)J(dxdy)\nonumber\\
&=&2<L^{\delta}(u,v),\chi>-{\mathcal{E}}^{c,\delta}(uv,\chi)\nonumber\\
&&-\int_{U\times U\backslash d}2\left((uv)(y)-(uv)(x)-\sum_{i=1}^{n}(y_{i}-x_{i})\frac{\partial (uv)}{\partial y_{i}}(y)I_{\{|x-y|\le\delta\}}(x,y)\right)\chi(y)J(dxdy)\nonumber\\
&&-\int_{U\times U\backslash d}2\left((u(x)-u(y))\chi(y)-\sum_{i=1}^{n}(x_{i}-y_{i})\frac{\partial u}{\partial x_{i}}(x)I_{\{|x-y|\le\delta\}}(x,y)\right)v(x)J(dxdy)\nonumber\\
&&+\int_{U\times U\backslash d}2\left(v(y)-v(x)-\sum_{i=1}^{n}(y_{i}-x_{i})\frac{\partial v}{\partial y_{i}}(y)I_{\{|x-y|\le\delta\}}(x,y)\right)u(y)J(dxdy)\nonumber\\
&=&2<L^{\delta}(u,v),\chi>-{\mathcal{E}}^{c,\delta}(uv,\chi)\nonumber\\
&&+\int_{U\times U\backslash
d}2\sum_{i=1}^{n}(y_{i}-x_{i})\left(\frac{\partial u}{\partial
y_{i}}(y)v(y)-\frac{\partial u}{\partial
x_{i}}(x)v(x)\right)I_{\{|x-y|\le\delta\}}(x,y)J(dxdy).
\end{eqnarray}

By (\ref{eee0}) and (\ref{eee44}), we obtain the following
theorem.

\begin{thm}\label{thm07}
Suppose $u,v\in C^{\infty}_{0}(U^{\delta})$ and $\chi\in
C^{\infty}_{0}(U^{\delta})$ satisfying $\chi=1$ on a neighbourhood
of ${\rm supp}[u]\cup{\rm supp}[v]$. Then
\begin{eqnarray}\label{bb1}
& &{\mathcal{E}}^{c,\delta}(u,v)=\frac{1}{2}\int_{U}\chi d\mu_{<u,v>}^c+<L^{\delta}(u,v),\chi>\nonumber\\
& &\ \ +\int_{U\times U\backslash
d}\sum_{i=1}^{n}(y_{i}-x_{i})\left(\frac{\partial u}{\partial
y_{i}}(y)v(y)-\frac{\partial u}{\partial
x_{i}}(x)v(x)\right)I_{\{|x-y|\le\delta\}}(x,y)J(dxdy).\ \ \ \ \ \
\ \ \ \ \
\end{eqnarray}
\end{thm}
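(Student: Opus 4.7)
The plan is to derive (\ref{bb1}) as an immediate consequence of the two identities (\ref{eee0}) and (\ref{eee44}) already assembled in the preceding discussion. First I would apply (\ref{eee0}) with the specific choice $h=\chi$, which is permitted since by hypothesis $\chi\in C^{\infty}_0(U^{\delta})$ and $\chi\equiv 1$ on a neighbourhood of ${\rm supp}[u]\cup{\rm supp}[v]$. This expresses the symmetric combination ${\mathcal E}^{c,\delta}(u,v)+{\mathcal E}^{c,\delta}(v,u)$ as $\int_U\chi\, d\mu^c_{<u,v>}+{\mathcal E}^{c,\delta}(uv,\chi)$. Next I would invoke (\ref{eee44}) as it stands, which expresses the anti-symmetric combination ${\mathcal E}^{c,\delta}(u,v)-{\mathcal E}^{c,\delta}(v,u)$ in terms of $2<L^{\delta}(u,v),\chi>$, the diagonal term $-{\mathcal E}^{c,\delta}(uv,\chi)$, and the jumping integral featuring $(y_i-x_i)(\frac{\partial u}{\partial y_i}(y)v(y)-\frac{\partial u}{\partial x_i}(x)v(x))$ restricted to $\{|x-y|\le\delta\}$.

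The key observation is that adding the two identities causes the diagonal correction terms $+{\mathcal E}^{c,\delta}(uv,\chi)$ and $-{\mathcal E}^{c,\delta}(uv,\chi)$ to cancel exactly, so the left-hand sides collapse to $2{\mathcal E}^{c,\delta}(u,v)$. Dividing by $2$ then yields (\ref{bb1}) directly, where the factor of $\frac{1}{2}$ in front of $\int_U\chi\,d\mu^c_{<u,v>}$ and the absence of the factor $2$ in front of $<L^{\delta}(u,v),\chi>$ both arise from this final halving.

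I do not foresee a genuine obstacle here, since all the substantive analytic work has already been discharged earlier in the section: the existence of the mutual energy measure $\mu^c_{<u,v>}$ via (\ref{hgfv}) and its link to ${\mathcal E}^{c,\delta}$ through (\ref{ee1}) and (\ref{eee0}); the definition of the functional $L^{\delta}(u,v)$ in (\ref{e01}) together with its relation to the anti-symmetric combination via (\ref{eee1}); and the delicate vague-limit manipulation behind (\ref{eee44}) that uses (\ref{E102}), (\ref{z1}) and the L\'evy-Khintchine type integrability estimates of Lemma \ref{l1}. The only small check at this stage is the sign bookkeeping to confirm that addition, not subtraction, is what produces the cancellation of the diagonal terms, and that the choice $h=\chi$ is admissible in both formulas, which is guaranteed by the hypothesis on $\chi$.
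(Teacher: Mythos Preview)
Your proposal is correct and matches the paper's own argument exactly: the paper simply states that the theorem follows from (\ref{eee0}) and (\ref{eee44}), and your write-up spells out precisely the addition-and-halving that this entails, including the cancellation of the ${\mathcal{E}}^{c,\delta}(uv,\chi)$ terms.
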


%%%%%%%%%%%%%%%%%%%%%%%%%%%%%%%%%%%%%%%%%%%%%%%%%%%%%%%%%%%%%%%%%%%%%%%%%%%%%%%%%%%%%%%%%%%%%%%%%%
\subsection{Transformation rules for the symmetric and co-symmetric diffusion parts}

In this subsection, we will derive transformation rules for the
sign Radon measure $\mu^c_{<\cdot,\cdot>}$ and the lineal
functional $L^{\delta}(\cdot,\cdot)$ introduced in \S 2.1.

\begin{thm}\label{thm9}
(i) For $u, v, w\in C_{0}^{\infty}(U)$,
$$
d\mu^{c}_{<uv,w>}={u}d\mu^{c}_{<v,w>}+{v}d\mu^{c}_{<u,w>}.
$$

\noindent (ii) For $u, v, w, f\in
C_{0}^{\infty}(U^{\delta})$,
$$
<L^{\delta}(u,vw),f>=<L^{\delta}(u,v),wf>.
$$

\noindent (iii) For $u, v, w, f\in C_{0}^{\infty}(U^{\delta})$,
$$<L^{\delta}(uv,w),f>=<L^{\delta}(u,w),vf>+<L^{\delta}(v,w),uf>.$$
\end{thm}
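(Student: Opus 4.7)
The plan is to prove the three parts in sequence, with Part (i) carrying the main technical weight and Parts (ii) and (iii) following largely algebraically from the formulas of \S 2.1.

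For Part (i), I would start from the defining formula (\ref{hgfv}) and use the pointwise identity
$$(uv)(y)-(uv)(x)=u(y)(v(y)-v(x))+v(y)(u(y)-u(x))-(u(y)-u(x))(v(y)-v(x)).$$
Multiplying by $(w(y)-w(x))g(y)$ and passing to the double limit in (\ref{hgfv}) rewrites $\int_U g\,d\mu_{<uv,w>}^c$ as $\int_U (ug)\,d\mu_{<v,w>}^c+\int_U (vg)\,d\mu_{<u,w>}^c$ plus the error
$$-\lim_{l\to\infty}\lim_{\beta\to\infty}\beta\int_{\Lambda_l}(u(y)-u(x))(v(y)-v(x))(w(y)-w(x))g(y)\sigma_\beta(dxdy).$$
By Taylor's theorem for $u,v,w\in C_0^\infty(U)$ the integrand is $O(|y-x|^3\|g\|_\infty)$, and since $|y-x|<\varsigma_l$ on $\Lambda_l$, the error is bounded by $C\varsigma_l\cdot\beta\int_{\Lambda_l}|y-x|^2|g(y)|\sigma_\beta(dxdy)$. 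To control the second factor, I would pick $w'\in C_0^\infty(U)$ with $w'\equiv 1$ on an open neighbourhood $N$ of ${\rm supp}[g]$ and set $u_i(x):=x_iw'(x)\in C_0^\infty(U)$; for $l$ large enough that $\varsigma_l$ is less than the distance from ${\rm supp}[g]$ to $U\setminus N$, the equality $(y_i-x_i)^2g(y)=(u_i(y)-u_i(x))^2g(y)$ holds on $\Lambda_l$, so the second factor converges (as $\beta\to\infty$, then $l\to\infty$) to $\sum_{i=1}^n\int_U|g|\,d\mu_{<u_i>}^c<\infty$, and multiplication by $\varsigma_l\downarrow 0$ finishes the argument.

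Part (ii) is immediate from the definition (\ref{e01}): since $(vw)f=v(wf)$ as functions, both sides equal $\frac{1}{2}(\mathcal{E}^{c,\delta}(u,vwf)-\hat{\mathcal{E}}^{c,\delta}(u,vwf))$.

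For Part (iii), the key is to show that the same Leibniz-type identity (\ref{ee1}) holds for $\hat{\mathcal{E}}^{c,\delta}$ with the same measure $\mu_{<u,v>}^c$. Mirroring the computation behind (\ref{kl13}), the $\Gamma_l$-piece of $2\hat{\mathcal{E}}^{c,\delta}(u,uf)-\hat{\mathcal{E}}^{c,\delta}(u^2,f)$ cancels identically via $\partial_{y_i}(u^2)=2u\partial_{y_i}u$, leaving
$$2\hat{\mathcal{E}}^{c,\delta}(u,uf)-\hat{\mathcal{E}}^{c,\delta}(u^2,f)=\lim_{l\to\infty}\lim_{\beta\to\infty}\beta\int_{\Lambda_l}(u(y)-u(x))^2f(y)\hat\sigma_\beta(dxdy).$$
The duality $\hat\sigma_\beta(dxdy)=\sigma_\beta(dydx)$ from Lemma \ref{zxcv22} rewrites this as $\lim_l\lim_\beta\beta\int_{\Lambda_l}(u(y)-u(x))^2f(x)\sigma_\beta(dxdy)$, and replacing $f(x)$ by $f(y)$ costs an $O(|y-x|^3)$ term that vanishes by the argument of Part (i). Hence the right-hand side equals $\int_U f\,d\mu_{<u>}^c$, and polarization yields $\hat{\mathcal{E}}^{c,\delta}(u,vf)+\hat{\mathcal{E}}^{c,\delta}(v,uf)-\hat{\mathcal{E}}^{c,\delta}(uv,f)=\int_U f\,d\mu_{<u,v>}^c$, identical to (\ref{ee1}). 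Subtracting these two Leibniz identities (with $f$ replaced by $wf\in C_0^\infty(U^\delta)$) and invoking the definition (\ref{e01}) of $L^\delta$ gives (iii) directly. The main obstacle throughout is the uniform suppression of the cubic-order correction terms, which the localization $u_i(x)=x_iw'(x)$ handles by reducing them to the finite energy measures $\mu_{<u_i>}^c$.
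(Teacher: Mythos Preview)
Your proof is correct, but the tactics differ from the paper's in two places worth noting.

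For (i), both you and the paper arrive at the same cubic error term $\lim_{l}\lim_{\beta}\beta\int_{\Lambda_l}(u(y)-u(x))(v(y)-v(x))(w(y)-w(x))g(y)\sigma_\beta(dxdy)$ (the paper reduces to $u^2$ first, but that is cosmetic). You kill it by the Lipschitz estimate $|y-x|^3\le\varsigma_l|y-x|^2$ on $\Lambda_l$ together with the coordinate localisation $u_i=x_iw'$, which reduces the remaining factor to the finite quantities controlled by $\mu^c_{<u_i>}$. The paper instead inserts a cutoff $\chi(x)$ equal to $1$ near ${\rm supp}[u]\cup{\rm supp}[v]$, so that the compactly supported continuous integrand passes to a $J$-integral on $\Lambda_l$ via the vague convergence $(\beta/2)\sigma_\beta\to J$ on the continuous sets $\Gamma_l$, and then invokes Lemma~\ref{l1}(i) to see that the $J$-integral over $\Lambda_l$ tends to $0$. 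Your route uses the Euclidean smoothness more explicitly; the paper's route is closer to the general-state-space argument of Section~3.

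For (iii), the paper again reduces to $u^2$ and computes $<L^{\delta}(u^2,v),f>-2<L^{\delta}(u,v),uf>$ directly from the $\sigma_\beta/\hat\sigma_\beta$ formulae, obtaining a single anti-symmetrised cubic term that is handled by the same $\chi$-cutoff trick. Your approach is structurally different and arguably cleaner: you first establish the \emph{dual} Leibniz identity $\hat{\mathcal E}^{c,\delta}(u,vf)+\hat{\mathcal E}^{c,\delta}(v,uf)-\hat{\mathcal E}^{c,\delta}(uv,f)=\int_U f\,d\mu^c_{<u,v>}$ (using the symmetry of $\Lambda_l$ and the swap $\hat\sigma_\beta(dxdy)=\sigma_\beta(dydx)$, then replacing $f(x)$ by $f(y)$ at cubic cost), and subtract it from (\ref{ee1}). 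This dual identity is a useful byproduct in its own right; the paper does not isolate it.
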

\begin{proof}\ \ We assume without loss of generality that $u, v, w, f\in
C_{0}^{\infty}(U^{\delta})$.

(i) We need only show that
$d\mu^{c}_{<u^2,v>}=2{u}d\mu^{c}_{<u,v>}$. We choose a $\chi\in
C^{\infty}_{0}(U)$ satisfying $\chi=1$ on a neighbourhood of
${{\rm supp}[u]\cup {\rm supp}[v]}$. Let $g\in
C_{0}^{\infty}(U^{\delta})$. Then, by (\ref{hgfv}) and Lemma
\ref{l1}(i), we get
\begin{eqnarray*}
& &\int_{U^{\delta}}gd\mu_{<u^2,v>}^c-2\int_{U^{\delta}}gud\mu_{<u,v>}^c\\
&=&-\lim_{l\rightarrow\infty}\lim_{\beta\rightarrow\infty}\beta\int_{\Lambda_l}(u(y)-u(x))^2(v(y)-v(x))g(y)
\sigma_{\beta}(dxdy)\\
&=&-\lim_{l\rightarrow\infty}\lim_{\beta\rightarrow\infty}\beta\int_{\Lambda_l}(u(y)-u(x))^2(v(y)-v(x))g(y)\chi(x)
\sigma_{\beta}(dxdy)\\
&
&-\lim_{l\rightarrow\infty}\lim_{\beta\rightarrow\infty}\beta\int_{\Lambda_l}u^2(y)v(y)g(y)(1-\chi(x))
\sigma_{\beta}(dxdy)\\
&=&-\lim_{l\rightarrow\infty}\int_{\Lambda_l}2(u(y)-u(x))^2(v(y)-v(x))g(y)\chi(x)
J(dxdy)\\
&=&0.
\end{eqnarray*}

(ii) is obvious by (\ref{e01}).

(iii) We need only show that
$<L^{\delta}(u^2,v),f>=2<L^{\delta}(u,v),uf>$. By (\ref{dd11}),
(\ref{dd22}) and (\ref{e01}), we get
\begin{eqnarray*}
& &<L^{\delta}(u,v),f>\\
&=&\lim_{n\rightarrow\infty}\lim_{l\rightarrow\infty}\lim_{\beta\rightarrow\infty}\frac{\beta}{2}\left\{\int_{\Lambda_l}(u(y)-u(x))(v(y)f(y)+v(x)f(x))\sigma_{\beta}(dxdy)\right.\nonumber\\
& & \ \ \ \ \ \ \ \  \ \ \ \ \ \ \ \ +\left.\int_{\Gamma_l}\sum_{i=1}^{n}(y_{i}-x_{i})\frac{\partial u}{\partial y_{i}}(y)I_{\{|x-y|\le\delta_n\}}(x,y)v(y)f(y){\sigma}_{\beta}(dxdy)\right.\\
& & \ \ \ \ \ \ \ \  \ \ \ \ \ \ \ \
-\left.\int_{\Gamma_l}\sum_{i=1}^{n}(y_{i}-x_{i})\frac{\partial
u}{\partial
y_{i}}(y)I_{\{|x-y|\le\delta_n\}}(x,y)v(y)f(y){\hat\sigma}_{\beta}(dxdy)\right\}.
\end{eqnarray*}
We choose a $\chi\in C^{\infty}_{0}(U)$ satisfying $\chi=1$ on a neighbourhood of ${{\rm supp}[u]\cup {\rm supp}[v]}$. Then, by Lemma \ref{l1}(i), we get
\begin{eqnarray*}
& &<L^{\delta}(u^2,v),f>-2<L^{\delta}(u,v),uf>\\
&=&-\lim_{l\rightarrow\infty}\lim_{\beta\rightarrow\infty}\frac{\beta}{2}\int_{\Lambda_l}(u(y)-u(x))^2(v(y)f(y)-v(x)f(x))\sigma_{\beta}(dxdy)\\
&=&-\lim_{l\rightarrow\infty}\lim_{\beta\rightarrow\infty}\frac{\beta}{2}\int_{\Lambda_l}
(u(y)-u(x))^2(v(y)f(y)-v(x)f(x))\chi(x)\chi(y)\sigma_{\beta}(dxdy)\\
&&-\lim_{l\rightarrow\infty}\lim_{\beta\rightarrow\infty}\frac{\beta}{2}\int_{\Lambda_l}
u^2(y)v(y)f(y)(1-\chi(x))\sigma_{\beta}(dxdy)\\
&&+\lim_{l\rightarrow\infty}\lim_{\beta\rightarrow\infty}\frac{\beta}{2}\int_{\Lambda_l}
u^2(x)v(x)f(x)(1-\chi(y))\sigma_{\beta}(dxdy)\\
&=&-\lim_{l\rightarrow\infty}\int_{\Lambda_l}
(u(y)-u(x))^2(v(y)f(y)-v(x)f(x))\chi(x)\chi(y)J(dxdy)\\
&=&0.
\end{eqnarray*}
\end{proof}

Let $w\in C^{\infty}_0(U)$ and $V$ be a relatively compact open
set of $U$. If $w=k$ (constant) on $V$, then $\mu_{<w>}^c=0$ on
$V$.  In fact, taking an $f\in C^{\infty}_0(V)$, we obtain by
Theorem \ref{thm9}(i) that
$$
kd\mu^{c}_{<f,w>}=d\mu^{c}_{<fw,w>}=fd\mu^{c}_{<w>}+wd\mu^{c}_{<f,w>},
$$
which implies that $fd\mu^{c}_{<w>}=0$ on $V$. Since $f\in
C^{\infty}_0(V)$ is arbitrary, $\mu_{<w>}^c=0$ on $V$. For $u,v\in
C^{\infty}(U)$,  we choose a sequence of functions
$\{u_l,v_l\}\subset C^{\infty}_0(U)$ such that $u=u_l$ and $v=v_l$
on $\Omega_l$. Therefore, we can well define the measure
$\mu_{<u,v>}^c$ by $\mu_{<u,v>}^c=\mu_{<u_l,v_l>}^c$ on
$\Omega_l$. The definition of $\mu_{<u,v>}^c$ is  independent of
the selections of $\{\Omega_l\}$ and $\{u_l,v_l\}$.

For $u,v\in C^{\infty}(U^{\delta})$,  we choose a sequence of
relatively compact open sets $V_l\uparrow U^{\delta}$ and a
sequence of functions $\{u_l,v_l\}\subset
C^{\infty}_0(U^{\delta})$ such that $u=u_l$ and $v=v_l$  on $V_l$.
By (\ref{e01}) and the left strong local property  of
${\mathcal{E}}^{c,\delta}$ and $\hat{\mathcal{E}}^{c,\delta}$, we
can well define the linear functional $L^{\delta}(u,v)$ by
$<L^{\delta}(u,v),f>=\lim_{l\rightarrow\infty}<L^{\delta}(u_l,v_l),f>$
for $f\in C^{\infty}_0(U^{\delta})$. The definition of
$L^{\delta}(u,v)$ is independent of the selections of $\{V_l\}$
and $\{u_l,v_l\}$.

\begin{thm}\label{t28} Let $\Phi\in C^{\infty}(\mathbb{R}^m)$.

\noindent (i) For $u_1, \dots, u_m,v,w\in C^{\infty}(U)$,
$$d\mu^c_{<\Phi(u_1,\dots,u_m),v>}=\sum_{i=1}^m\Phi_{x_i}(u_1,\dots,u_m)d\mu^c_{<u_i,v>}.
$$

\noindent (ii) For $u_1, \dots, u_m,v,w\in C^{\infty}(U^{\delta})$
and $f\in C_0^{\infty}(U^{\delta})$,
$$<L^{\delta}(\Phi(u_1,\dots,u_m),vw),f>=\sum_{i=1}^m<L^{\delta}(u_i,v),\Phi_{x_i}(u_1,\dots,u_m)wf>.
$$
\end{thm}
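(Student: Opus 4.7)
The plan is to prove both identities first for polynomial $\Phi$ by induction, using the three Leibniz-type rules of Theorem \ref{thm9}, and then extend to arbitrary $\Phi\in C^{\infty}(\mathbb{R}^m)$ via polynomial approximation in $C^1$-norm on compact sets together with a Cauchy--Schwarz estimate. Using the localization discussion preceding Theorem \ref{t28}, we may assume throughout that all $u_i,v,w$ lie in $C_0^{\infty}$, so that Theorem \ref{thm9} applies directly. The hypothesis $\Phi(0)=0$ is not needed: constants contribute zero to $\mu^c_{<\cdot,v>}$ (immediate from (\ref{hgfv})) and to $L^{\delta}(\cdot,v)$ (by the left strong local property of $\mathcal{E}^{c,\delta}$ and $\hat{\mathcal{E}}^{c,\delta}$ used in (\ref{e01})).

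For (i), an induction on $n$ using Theorem \ref{thm9}(i) applied to $u^n=u\cdot u^{n-1}$ gives $d\mu^c_{<u^n,v>}=nu^{n-1}\,d\mu^c_{<u,v>}$, and a routine multivariate induction yields the rule for every monomial and, by linearity, for every polynomial $\Phi$. To extend to $\Phi\in C^{\infty}(\mathbb{R}^m)$, fix $g\in C_0^{\infty}(U)$ and let $S$ be a compact neighbourhood in $\mathbb{R}^m$ of the image $\{(u_1(x),\dots,u_m(x)):x\in\mathrm{supp}(g)\}$. Standard mollification plus Stone--Weierstrass produces polynomials $\Phi_n$ with $\Phi_n\to\Phi$ and $(\Phi_n)_{x_i}\to\Phi_{x_i}$ uniformly on $S$. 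Passing Cauchy--Schwarz through the defining limit (\ref{hgfv}) gives
$$\left|\int_U g\,d\mu^c_{<w',v>}\right|^2\le \int_U |g|\,d\mu^c_{<w'>}\cdot\int_U |g|\,d\mu^c_{<v>};$$
taking $w'=(\Phi-\Phi_n)(u_1,\dots,u_m)$ and combining the mean-value bound $(w'(y)-w'(x))^2\le C\sum_i\|(\Phi-\Phi_n)_{x_i}\|_{\infty,S}^2(u_i(y)-u_i(x))^2$ with Lemma \ref{l1}(i) drives the left-hand side to zero; the right-hand side of the polynomial identity converges by bounded convergence, yielding (i).

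For (ii), the analogous induction using Theorem \ref{thm9}(iii) (Leibniz in the first slot) first produces, for polynomial $\Phi$ and any $h\in C_0^{\infty}(U^{\delta})$,
$$<L^{\delta}(\Phi(u_1,\dots,u_m),h),f>=\sum_{i=1}^m<L^{\delta}(u_i,h),\Phi_{x_i}(u_1,\dots,u_m)f>;$$
setting $h=vw$ and applying Theorem \ref{thm9}(ii) to each summand produces the stated identity for polynomial $\Phi$. The extension to $\Phi\in C^{\infty}(\mathbb{R}^m)$ requires continuity of $u\mapsto <L^{\delta}(u,vw),f>$ under $C^1$-convergence of the polynomial approximants. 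By (\ref{e01}) this reduces to continuity of $\mathcal{E}^{c,\delta}(\cdot,vwf)$ and $\hat{\mathcal{E}}^{c,\delta}(\cdot,vwf)$; inspecting (\ref{dd11})--(\ref{dd22}) the $u$-dependence enters through $u(y)-u(x)$ on $\Lambda_l$ (controlled by Lemma \ref{l1}(i) and Cauchy--Schwarz exactly as in (i)) and through $\partial_i u(y)I_{\{|x-y|\le\delta_n\}}$ on $\Gamma_l$ (controlled by Lemma \ref{l1}(ii), the factor $|y_i-x_i|\le\delta_n$ absorbing the prefactor). Uniform convergence of both $\Phi_n(u_1,\dots,u_m)$ and its first partials on compact subsets of $U^{\delta}$ then closes the argument. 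The main obstacle is precisely this continuity step: the gradient term $\sum_i(y_i-x_i)\partial_i u(y)$ built into $\mathcal{E}^{c,\delta}$ demands $C^1$- (not merely $C^0$-) control of the polynomial approximants, and one must carefully commute the $n$-limit from polynomial approximation with the iterated $\beta$- and $l$-limits in (\ref{dd11})--(\ref{dd22}).
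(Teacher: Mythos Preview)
Your reduction to the polynomial case via Theorem \ref{thm9}, and your approximation scheme for part (i), are both sound. The Cauchy--Schwarz bound on $\mu^c_{<\cdot,v>}$ obtained by passing through (\ref{hgfv}) is a legitimate and in fact more direct route than the paper's; the paper instead unwinds $\int g\,d\mu^c_{<\Phi(u),v>}$ via (\ref{ee1}) into three $\mathcal{E}^{c,\delta}$-terms and then uses the decomposition (\ref{znew}) to trade each $\mathcal{E}^{c,\delta}$ for an $\mathcal{E}$-term plus an absolutely convergent jump integral, whereupon Assumption \ref{as1} (or $C_0^\infty\subset D(A)\cap D(\hat A)$) handles the $\mathcal{E}$-terms and dominated convergence the integrals.

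Part (ii), however, has a real gap. You correctly isolate the obstacle --- commuting the polynomial-approximation limit with the iterated $\beta\to\infty$, $l\to\infty$ limits defining $\mathcal{E}^{c,\delta}$ and $\hat{\mathcal{E}}^{c,\delta}$ in (\ref{dd11})--(\ref{dd22}) --- but you do not resolve it. The $\Lambda_l$-piece $\lim_l\lim_\beta\beta\int_{\Lambda_l}(u(y)-u(x))(vf)(y)\,\sigma_\beta(dxdy)$ is \emph{linear} in $u$, not bilinear, so the Cauchy--Schwarz trick that worked for $\mu^c_{<\cdot,\cdot>}$ does not apply: the obvious pairing produces a factor $\beta\int_{\Lambda_l}|vf|(y)\,\sigma_\beta(dxdy)$ which need not stay bounded. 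There is no a priori estimate in the paper that controls $\mathcal{E}^{c,\delta}(w',\cdot)$ or $\hat{\mathcal{E}}^{c,\delta}(w',\cdot)$ by a $C^1$-norm of $w'$ directly from the iterated-limit definitions.

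The paper's key idea is to \emph{avoid} the iterated limits altogether: using (\ref{znew}) and (\ref{z1}) one writes
\[
\mathcal{E}^{c,\delta}(\Phi(u),vf)=\mathcal{E}(\Phi(u),vf)-\mathcal{E}(\phi,\Phi(u)vf)-\text{(explicit $J$-integral)},
\]
and similarly for $\hat{\mathcal{E}}^{c,\delta}$. Continuity of $\Phi^{(k)}(u)\to\Phi(u)$ in the first two terms then comes for free from the standing hypothesis $C_0^\infty(U)\subset D(A)\cap D(\hat A)$ (in Case 1 this is just continuity of $A$ and $\hat A$ applied to a fixed test function) or from Assumption \ref{as1} (in Case 2); the jump integrals converge by dominated convergence, since Taylor's theorem and Lemma \ref{l1}(ii) provide a $J$-integrable majorant on $\{|x-y|\le\delta\}$ and the off-diagonal piece is a finite measure. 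This is the missing ingredient in your argument for (ii): you should rewrite $L^{\delta}(\Phi(u),v)$ through (\ref{znew})--(\ref{z1}) rather than attempt a direct estimate of the defining limits.
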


\begin{proof} Since the constant function
belongs to $C^{\infty}(U)$, to prove the theorem, we may assume
without loss of generality that $\Phi\in C^{\infty}(\mathbb{R}^m)$
with $\Phi(0)=0$ and $u_1,\dots,u_m,v,w,f\in
C_{0}^{\infty}(U^{\delta})$.  To simplify notation, we denote
$u=(u_1,\dots,u_m)$. Let ${\cal C}$ be the family of all $\Phi$
satisfying (i) and (ii). By Theorem \ref{thm9}, we know that if
$\Psi,\Gamma\in{\cal C}$, then $\Psi\Gamma\in {\cal C}$. Since
${\cal C}$ contains the coordinate functions, it contain all
polynomials vanishing at the origin.

Let $V$ be a finite cube containing the range of $u$.  Then, there
exists a sequence $\{\Phi^{(k)}\}$ of polynomials vanishing at the
origin such that $\Phi^{(k)}$ and all of its partial derivatives
converge uniformly to $\Phi$ and its corresponding partial
derivatives on $V$ (cf. \cite[II \S4]{CH53}).

(i) Let $g\in C_{0}^{\infty}(U^{\delta})$. We choose a $\phi\in
C^{\infty}_{0}(U)$ satisfying $\phi=1$ on
$F^{\delta}_{u_1}\cup\cdots\cup F^{\delta}_{u_m}\cup
F^{\delta}_v\cup F^{\delta}_{g}$ (see (\ref{eqa}) for the definition of $F^{\delta}_{\cdot}$). Then, we obtain by (\ref{znew}),
(\ref{ee1}), the assumption that $C^{\infty}_{0}(U)\subset D(A)\cap D(\hat A)$ or Assumption \ref{as1}, Taylor's theorem and Lemma \ref{l1}(ii), the
finiteness of $J$ on $({\rm supp}[\phi]\times {\rm
supp}[\phi])\cap\{|x-y|>\delta\}$, and the dominated convergence
theorem that
\begin{eqnarray*}
& &\int_{U^{\delta}}gd\mu^c_{<\Phi(u),v>}\\
&=&{\mathcal{E}}^{c,\delta}(\Phi(u),vg)+{\mathcal{E}}^{c,\delta}(v,\Phi(u)g)-{\mathcal{E}}^{c,\delta}(\Phi(u)v,g)\\
&=&{\cal E}(\Phi(u),vg)+{\cal E}(v,\Phi(u)g)-{\cal E}(\Phi(u)v,g)-{\cal E}(\phi,\Phi(u)vg)\\
&-&\int_{U\times U\backslash d}2\left(\Phi(u)(y)-\Phi(u)(x)-\sum_{i=1}^{n}(y_{i}-x_{i})\frac{\partial \Phi(u)}{\partial y_{i}}(y)I_{\{|x-y|\le\delta\}}(x,y)\right)(vg)(y)\phi(x)J(dxdy)\\
&-&\int_{U\times U\backslash d}2\left(v(y)-v(x)-\sum_{i=1}^{n}(y_{i}-x_{i})\frac{\partial v}{\partial y_{i}}(y)I_{\{|x-y|\le\delta\}}(x,y)\right)(\Phi(u)g)(y)\phi(x)J(dxdy)\\
&+&\int_{U\times U\backslash d}2\left((\Phi(u)v)(y)-(\Phi(u)v)(x)-\sum_{i=1}^{n}(y_{i}-x_{i})\frac{\partial (\Phi(u)v)}{\partial y_{i}}(y)I_{\{|x-y|\le\delta\}}(x,y)\right)\\
& &\ \ \ \ \cdot g(y)\phi(x)J(dxdy)\\
&=&\lim_{k\rightarrow\infty}\left\{{\cal E}(\Phi^{(k)}(u),vg)+{\cal E}(v,\Phi^{(k)}(u)g)-{\cal E}(\Phi^{(k)}(u)v,g)-{\cal E}(\phi,\Phi^{(k)}(u)vg)\right.\\
&-&\int_{U\times U\backslash d}2\left(\Phi^{(k)}(u)(y)-\Phi^{(k)}(u)(x)-\sum_{i=1}^{n}(y_{i}-x_{i})\frac{\partial \Phi^{(k)}(u)}{\partial y_{i}}(y)I_{\{|x-y|\le\delta\}}(x,y)\right)\\
& &\ \ \ \ \cdot (vg)(y)\phi(x)J(dxdy)\\
&-&\int_{U\times U\backslash d}2\left(v(y)-v(x)-\sum_{i=1}^{n}(y_{i}-x_{i})\frac{\partial v}{\partial y_{i}}(y)I_{\{|x-y|\le\delta\}}(x,y)\right)(\Phi^{(k)}(u)g)(y)\phi(x)J(dxdy)\\
&+&\left.\int_{U\times U\backslash d}2\left((\Phi^{(k)}(u)v)(y)-(\Phi^{(k)}(u)v)(x)\right.\right.\\
& &\ \ \ \ \ \ \ \ \ \ \ \ \ -\sum_{i=1}^{n}(y_{i}-x_{i})\frac{\partial (\Phi^{(k)}(u)v)}{\partial y_{i}}(y)I_{\{|x-y|\le\delta\}}(x,y))g(y)\phi(x)J(dxdy)\}\\
&=&\lim_{k\rightarrow\infty}\{{\mathcal{E}}^{c,\delta}(\Phi^{(k)}(u),vg)+{\mathcal{E}}^{c,\delta}(v,\Phi^{(k)}(u)g)-{\mathcal{E}}^{c,\delta}(\Phi^{(k)}(u)v,g)\}\\
&=&\lim_{k\rightarrow\infty}\int_{U^{\delta}}gd\mu^c_{<\Phi^{(k)}(u),v>}\\
&=&\lim_{k\rightarrow\infty}\sum_{i=1}^m\int_{U^{\delta}}g\Phi^{(k)}_{x_i}(u)d\mu^c_{<u_i,v>}\\
&=&\sum_{i=1}^m\int_{U^{\delta}}g\Phi_{x_i}(u)d\mu^c_{<u_i,v>}.
\end{eqnarray*}

(ii) We choose a $\phi\in C^{\infty}_{0}(U)$ satisfying $\phi=1$
on $F^{\delta}_{u_1}\cup\cdots\cup F^{\delta}_{u_m}\cup
F^{\delta}_v\cup F^{\delta}_{f}$. By (\ref{znew}), (\ref{z1}),
(\ref{e01}), the assumption that $C^{\infty}_{0}(U)\subset
D(A)\cap D(\hat A)$ or Assumption \ref{as1}, Taylor's theorem and
Lemma \ref{l1}(ii), the finiteness of $J$ on $({\rm
supp}[\phi]\times {\rm supp}[\phi])\cap\{|x-y|>\delta\}$, and the
dominated convergence theorem, we get
\begin{eqnarray*}
& &<L^{\delta}(\Phi(u),v),f>\\
&=&\frac{1}{2}({\mathcal{E}}^{c,\delta}(\Phi(u),vf)-\hat{\mathcal{E}}^{c,\delta}(\Phi(u),vf))\\
&=&\frac{1}{2}\left[\mathcal{E}(\Phi(u),vf)-{\cal E}(\phi,\Phi(u)vf)-\hat\mathcal{E}(\Phi(u),vf)+\hat{\cal E}(\phi,\Phi(u)vf)\right]\\
&- &\int_{U\times U\backslash d}\left(\Phi(u)(y)-\Phi(u)(x)-\sum_{i=1}^{n}(y_{i}-x_{i})\frac{\partial \Phi(u)}{\partial y_{i}}(y)I_{\{|x-y|\le\delta\}}(x,y)\right)(vf)(y)\phi(x)J(dxdy)\\
&+ &\int_{U\times U\backslash d}\left(\Phi(u)(y)-\Phi(u)(x)-\sum_{i=1}^{n}(y_{i}-x_{i})\frac{\partial \Phi(u)}{\partial y_{i}}(y)I_{\{|x-y|\le\delta\}}(x,y)\right)(vf)(y)\phi(x)\hat J(dxdy)\\
&=&\lim_{k\rightarrow\infty}\left\{\frac{1}{2}\left[\mathcal{E}(\Phi^{(k)}(u),vf)-{\cal
E}(\phi,\Phi^{(k)}(u)vf)
-\hat\mathcal{E}(\Phi^{(k)}(u),vf)+\hat{\cal E}(\phi,\Phi^{(k)}(u)vf)\right]\right.\\
&- &\int_{U\times U\backslash d}\left(\Phi^{(k)}(u)(y)-\Phi^{(k)}(u)(x)\right.\\
& &\ \ \ \ \ \ \ \ \ \ \ \ \ \left.-\sum_{i=1}^{n}(y_{i}-x_{i})\frac{\partial \Phi^{(k)}(u)}{\partial y_{i}}(y)I_{\{|x-y|\le\delta\}}(x,y))(vf)(y)\phi(x)J(dxdy)\right.\\
&+ &\int_{U\times U\backslash d}\left(\Phi^{(k)}(u)(y)-\Phi^{(k)}(u)(x)\right.\\
& &\ \ \ \ \ \ \ \ \ \ \ \ \ \left.-\sum_{i=1}^{n}(y_{i}-x_{i})\frac{\partial \Phi^{(k)}(u)}{\partial y_{i}}(y)I_{\{|x-y|\le\delta\}}(x,y))(vf)(y)\phi(x)\hat J(dxdy)\right\}\\
&=&\lim_{k\rightarrow\infty}\frac{1}{2}({\mathcal{E}}^{c,\delta}(\Phi^{(k)}(u),vf)-\hat{\mathcal{E}}^{c,\delta}(\Phi^{(k)}(u),vf))\\
&=&\lim_{k\rightarrow\infty}<L^{\delta}(\Phi^{(k)}(u),v),f>\\
&=&\lim_{k\rightarrow\infty}\sum_{i=1}^m<L^{\delta}(u_i,v),\Phi_{x_i}^{(k)}(u)f>\\
&=&\lim_{k\rightarrow\infty}\sum_{i=1}^m\frac{1}{2}({\mathcal{E}}^{c,\delta}(u_i,v\Phi_{x_i}^{(k)}(u)f)-\hat{\mathcal{E}}^{c,\delta}(u_i,v\Phi_{x_i}^{(k)}(u)f))\\
&=&\lim_{k\rightarrow\infty}\sum_{i=1}^m\left\{\frac{1}{2}[\mathcal{E}(u_i,v\Phi_{x_i}^{(k)}(u)f)-{\cal
E}(\phi,u_iv\Phi_{x_i}^{(k)}(u)f)-\hat\mathcal{E}(u_i,v\Phi_{x_i}^{(k)}(u)f)+\hat{\cal E}(\phi,u_iv\Phi_{x_i}^{(k)}(u)f)]\right.\\
&-&\int_{U\times U\backslash d}\left(u_i(y)-u_i(x)\right.\\
& &\ \ \ \ \ \ \ \ \ \ \ \ \ \left.-\sum_{j=1}^{n}(y_{j}-x_{j})\frac{\partial u_i}{\partial y_{j}}(y)I_{\{|x-y|\le\delta\}}(x,y))(v\Phi_{x_i}^{(k)}(u)f)(y)\phi(x)J(dxdy)\right.\\
&+ &\int_{U\times U\backslash d}\left(u_i(y)-u_i(x)\right.\\
& &\ \ \ \ \ \ \ \ \ \ \ \ \ \left.-\sum_{j=1}^{n}(y_{j}-x_{j})\frac{\partial u_i}{\partial y_{j}}(y)I_{\{|x-y|\le\delta\}}(x,y))(v\Phi_{x_i}^{(k)}(u)f)(y)\phi(x)\hat J(dxdy)\right\}\\
&=&\sum_{i=1}^m\left\{\frac{1}{2}[\mathcal{E}(u_i,v\Phi_{x_i}(u)f)-{\cal
E}(\phi,u_iv\Phi_{x_i}(u)f)-\hat\mathcal{E}(u_i,v\Phi_{x_i}(u)f)+\hat{\cal E}(\phi,u_iv\Phi_{x_i}(u)f)]\right.\\
&- &\int_{U\times U\backslash d}\left(u_i(y)-u_i(x)\right.\\
& &\ \ \ \ \ \ \ \ \ \ \ \ \ \left.-\sum_{j=1}^{n}(y_{j}-x_{j})\frac{\partial u_i}{\partial y_{j}}(y)I_{\{|x-y|\le\delta\}}(x,y))(v\Phi_{x_i}(u)f)(y)\phi(x)J(dxdy)\right.\\
&+ &\int_{U\times U\backslash d}\left(u_i(y)-u_i(x)\right.\\
& &\ \ \ \ \ \ \ \ \ \ \ \ \ \left.-\sum_{j=1}^{n}(y_{j}-x_{j})\frac{\partial u_i}{\partial y_{j}}(y)I_{\{|x-y|\le\delta\}}(x,y))(v\Phi_{x_i}(u)f)(y)\phi(x)\hat J(dxdy)\right\}\\
&=&\sum_{i=1}^m\frac{1}{2}({\mathcal{E}}^{c,\delta}(u_i,v\Phi_{x_i}(u)f)-\hat{\mathcal{E}}^{c,\delta}(u_i,v\Phi_{x_i}(u)f))\\
&=&\sum_{i=1}^m<L^{\delta}(u_i,v),\Phi_{x_i}(u)f>.
\end{eqnarray*}
Therefore, the proof is complete by noting Theorem \ref{thm9}(ii).
\end{proof}

\subsection{Proofs of Theorems \ref{new90} and \ref{lll}}
\begin{proof}[\textbf{Proofs of Theorems \ref{new90} and \ref{lll}}] We first characterize the
first two terms of (\ref{bb1}). Suppose that $u,v\in
C^{\infty}_{0}(U^{\delta})$ and $\chi\in
C^{\infty}_{0}(U^{\delta})$ satisfying $\chi=1$  on a
neighbourhood of  ${\rm supp}[u]\cup{\rm supp}[v]$. Denote by
$x_i$, $1\le i\le n$, the coordinate functions of $\mathbb{R}^n$.
For $1\le i,j\le n$, we define $\nu_{ij}:=\mu^c_{<x_i,x_j>}$,
which is a Radon measure on $U$. Then, by Theorem \ref{t28}(i), we
get
\begin{equation}\label{xc1}
\int_{U}\chi d\mu_{<u,v>}^c=\sum_{i,j=1}^n\int_{U}\frac{\partial
u}{\partial x_i}\frac{\partial v}{\partial x_j}\nu_{ij}(dx).
\end{equation}

For $1\le i\le n$, we define the linear functional $\Psi^{\delta}_i$
on $C^{\infty}_{0}(U^{\delta})$ by
\begin{equation}\label{xc3}
<\Psi^{\delta}_i, f>=<L^{\delta}(x_i,1),f>,\ \ f\in
C^{\infty}_{0}(U^{\delta}).
\end{equation}
Then, by Theorem \ref{t28}(ii) and (\ref{xc3}), we get
\begin{equation}\label{xc2}
<L^{\delta}(u,v),\chi>=\sum_{i=1}^n\left<\Psi^{\delta}_i,
\frac{\partial u}{\partial x_i}v\right>.
\end{equation}

We now show that each $\Psi^{\delta}_i$ is a generalized function on
$U^{\delta}$. Let $O$ be an arbitrary relatively compact open set
of $U^{\delta}$. Suppose that $\{f_n\}$ is a sequence of functions
in $C^{\infty}_{0}(O)$ such that $f_n$ and all of its partial
derivatives converge uniformly to some $f\in C^{\infty}_{0}(O)$
and its corresponding partial derivatives as $n\rightarrow\infty$.
We fix a $\xi_i\in C^{\infty}_0(U^{\delta})$ satisfying
$\xi_i=x_i$ on $O$ and choose a $\psi\in C^{\infty}_{0}(U)$
satisfying $\psi=1$ on $F^{\delta}_{\xi_i}\cup \{x\in U:\inf_{y\in
O}|x-y|\le\delta\}$. For $g\in C^{\infty}_{0}(O)$, by
(\ref{znew}), (\ref{z1}), (\ref{e01}) and (\ref{xc3}), we get
\begin{eqnarray}\label{FG1}
& &<\Psi^{\delta}_i, g>\nonumber\\
&=&<L^{\delta}(x_i,1),g>\nonumber\\
&=&\frac{1}{2}({\mathcal{E}}^{c,\delta}(\xi_i,g)-\hat{\mathcal{E}}^{c,\delta}(\xi_i,g))\nonumber\\
&=&\frac{1}{2}(\mathcal{E}(\xi_i,g)-{\cal E}(\psi,\xi_i g)-\hat\mathcal{E}(\xi_i,g)+\hat {\cal E}(\psi,\xi_i g))\nonumber\\
&&-\int_{U\times U\backslash d}\left(\xi_i(y)-\xi_i(x)-(y_i-x_i)I_{\{|x-y|\le\delta\}}(x,y)\right)g(y)\psi(x) J(dxdy)\nonumber\\
&&+\int_{U\times U\backslash
d}\left(\xi_i(y)-\xi_i(x)-(y_i-x_i)I_{\{|x-y|\le\delta\}}(x,y)\right)g(y)\psi(x)\hat
J(dxdy).\ \ \ \ \ \ \ \
\end{eqnarray}
Then, we obtain by (\ref{FG1}), the assumption that
$C^{\infty}_{0}(U)\subset D(A)\cap D(\hat A)$ or Assumption
\ref{as1}, Taylor's theorem and Lemma \ref{l1}(ii), the finiteness
of $J$ on $({\rm supp}[\psi]\times {\rm
supp}[\psi])\cap\{|x-y|>\delta\}$, and the dominated convergence
theorem that $<\Psi^{\delta}_i,
f>=\lim_{n\rightarrow\infty}<\Psi^{\delta}_i, f_n>$. Therefore, the
proof of Theorem \ref{lll} is complete by (\ref{E102}),
(\ref{bb1}), (\ref{xc1}) and (\ref{xc2}).

To complete the proof of Theorem \ref{new90}, we need only show
that there exist signed Radon measures
$\{\nu^{\delta}_i\}_{i=1}^n$ on $U^{\delta}$ such that for each
$1\le i\le n$,
$$
<\Psi^{\delta}_i, g>=\int_{U^{\delta}}g(x)\nu^{\delta}_i(dx),\ \
\forall g\in C^{\infty}_{0}(U^{\delta}).
$$
In fact, let $O$ be an arbitrary relatively compact open set of $U^{\delta}$. Then, by (\ref{FG1}), the assumption that $C^{\infty}_{0}(U)\subset D(A)\cap D(\hat A)$ and Lemma \ref{l1} (ii) and (iii), one finds that there exists a unique signed Radon measure $\nu_i^O$ on $O$ such that
$$
<\Psi^{\delta}_i, g>=\int_{O}g(x)\nu^O_i(dx),\ \ \forall g\in
C^{\infty}_{0}(O).
$$
Therefore, we can well define $\nu^{\delta}_i=\nu_i^O$ for each
$O$. The proof is complete.
\end{proof}

From now on till the end of this section, we suppose that $({\cal E}, D({\cal E}))$ is a semi-Dirichlet form on $L^2(U;m)$ satisfying $C^{\infty}_0(U)\subset D({\cal E})$.

\begin{rem}\label{rem2} Assumption \ref{as1} is implied by the following
assumption.
\begin{assum}\label{as2}
There exist a sequence of  Dirichlet forms $({\cal Q}^l, D({\cal
Q}^l))$ on $L^2(\Omega_l;m)$ and a sequence of positive constants $C_l$
such that $C^{\infty}_0(\Omega_l)\subset D({\cal Q}_l)$ and
$$
{\cal E}_1(g,g)\le C_l{\cal Q}^l_1(g,g), \ \ \forall g\in
C^{\infty}_0(\Omega_l).
$$
\end{assum}

In fact, suppose Assumption \ref{as2} holds and $O$ is a
relatively compact open set of $U^{\delta}$. Then, there exist an
open set $O_0$ of $U^{\delta}$ satisfying $\overline{O}\subset
O_0$  and a regular symmetric Dirichlet form $({\cal Q}, D({\cal
Q}))$ on $L^2(O_0;m)$ such that $C^{\infty}_0(O_0)\subset D({\cal
Q})$ and
\begin{equation}\label{GH1}
{\cal E}_1(g,g)\le C{\cal Q}_1(g,g), \ \ \forall g\in
C^{\infty}_0(O_0),
\end{equation}
for some positive constant $C$. We consider the classical
Beurling-Deny formula for $({\cal Q}, D({\cal Q}))$ (cf.
\cite[Theorem 3.2.3]{Fu94}):
\begin{eqnarray}\label{new2}
& &{\cal Q}(u,v)=\frac{1}{2}\sum_{i,j=1}^n\int_{O_0}\frac{\partial u}{\partial x_i}\frac{\partial v}{\partial x_j}\nu^{\cal Q}_{ij}(dx)\nonumber\\
& &\ \ \ \ +\int_{O_0\times O_0\backslash
d}(u(x)-u(y))(v(x)-v(y))J^{\cal Q}(dxdy)+\int_{O_0}u(x)v(x)K^{\cal
Q}(dx),\ \ \ \ \ \ \ \ \ \ \
\end{eqnarray}
where $u,v\in C^{\infty}_0(O_0)$ and we use the superscript
``${\cal Q}$" to emphasize that the corresponding Radon measures
are for $({\cal Q}, D({\cal Q}))$. Note that for any compact set
$K$ and open set $O_1$ with $K\subset O_1\subset O_0$ (cf.
\cite[(1.2.4)]{Fu94}),
\begin{equation}\label{nb11}
\int_{K\times K\backslash d}|x-y|^2J^{\cal Q}(dxdy)<\infty,\ \
J^{\cal Q}(K,O_0-O_1)<\infty.
\end{equation}

Suppose $\{f_n\}_{n=1}^{\infty}\subset C^{\infty}_{0}(O)$ and
$f\in C^{\infty}_{0}(O)$ satisfying $f_n$ and all of its partial
derivatives converge uniformly to $f$ and its corresponding
partial derivatives as $n\rightarrow\infty$. By (\ref{new2}),
(\ref{nb11}) and the dominated convergence theorem,  we find that
$f_n$ converges to $f$ w.r.t. the $\tilde{\cal Q}_{1}^{1/2}$-norm
as $n\rightarrow\infty$. Therefore, we obtain by (\ref{GH1}) that
$\lim_{n\rightarrow\infty}{\cal E}_1(f_n-f,f_n-f)=0$.

\end{rem}

\begin{cor}\label{dfg}
Assume the setting of Theorem \ref{lll} but with Assumption \ref{as1} replaced by Assumption \ref{as2}.
Then, we have the decomposition given in Theorem \ref{lll}. Moreover, for any relatively compact open set $O$ of $U^{\delta}$, there exist signed Radon measures $\{\mu^O_i\}_{i=1}^n$ and $\{\mu^O_{ij}\}_{i,j=1}^n$ on $O$ such that for $1\le i\le n$,
$$
<\Psi^{\delta}_i,
g>=\int_{O}g(x)\mu^O_i(dx)+\sum_{j=1}^n\int_{O}\frac{\partial
g}{\partial x_j}(x)\mu^O_{ij}(dx),\ \ \forall g\in
C^{\infty}_{0}(O).
$$
\end{cor}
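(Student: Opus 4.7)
Since Remark \ref{rem2} already shows that Assumption \ref{as2} implies Assumption \ref{as1}, the decomposition of Theorem \ref{lll} follows at once, and the generalized functions $\{\Psi_i^{\delta}\}$ are exactly those defined in (\ref{xc3}). The task is therefore to upgrade each $\Psi_i^{\delta}$, restricted to $C_0^{\infty}(O)$, into a distribution of order at most one and then to represent it via signed Radon measures.

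My plan is to begin from the explicit formula (\ref{FG1}). The two jump integrals appearing there are uniformly bounded by $C_O\|g\|_\infty$: on $\{|x-y|\le\delta\}$, Taylor's theorem combined with Lemma \ref{l1}(ii) controls the integrand by a constant times $|x-y|^2$; on $\{|x-y|>\delta\}$, compactness of ${\rm supp}[\psi]$ together with Lemma \ref{l1}(iii) provides finiteness. For the four bilinear-form terms in (\ref{FG1}) I invoke the weak sector condition $|\mathcal{E}_1(u,v)|\le K\,\mathcal{E}_1(u,u)^{1/2}\mathcal{E}_1(v,v)^{1/2}$ for the semi-Dirichlet form $\mathcal{E}$, combined with Assumption \ref{as2}, to reduce matters to estimating $\mathcal{Q}_1(g,g)^{1/2}$ for a dominating regular symmetric Dirichlet form $\mathcal{Q}$ on $L^2(O_0;m)$ with $\overline{O}\subset O_0$. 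By the Beurling-Deny formula (\ref{new2}) applied to $\mathcal{Q}$ and the integrability properties (\ref{nb11}), after splitting the non-local term into $\{|x-y|\le 1\}$ and $\{|x-y|>1\}$, one obtains
\begin{equation*}
\mathcal{Q}_1(g,g)^{1/2}\le \tilde C_O\left(\|g\|_\infty+\sum_{j=1}^n\left\|\frac{\partial g}{\partial x_j}\right\|_\infty\right),\quad g\in C_0^{\infty}(O),
\end{equation*}
and assembling all the estimates yields a $C^1$-bound
\begin{equation*}
|<\Psi_i^{\delta},g>|\le C_O^{\star}\left(\|g\|_\infty+\sum_{j=1}^n\left\|\frac{\partial g}{\partial x_j}\right\|_\infty\right),\quad g\in C_0^{\infty}(O).
\end{equation*}

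Once this bound is available, the linear map $g\mapsto(g,\partial g/\partial x_1,\dots,\partial g/\partial x_n)$ embeds $C_0^{\infty}(O)$ continuously into $C_0(\overline{O})^{n+1}$, and $\Psi_i^{\delta}$ extends by Hahn-Banach to a continuous linear functional on the image. The Riesz representation theorem applied to this extension then produces signed Radon measures $\mu_i^O$ and $\mu_{ij}^O$ ($1\le j\le n$) on $O$ realizing the claimed representation. The main technical step I anticipate will require care is the $C^1$-estimate on $\mathcal{Q}_1(g,g)^{1/2}$: one must control the non-local part of the Beurling-Deny formula for $\mathcal{Q}$ using both parts of (\ref{nb11}) in such a way that the resulting constants depend only on $O$ and $O_0$, not on the particular test function $g$.
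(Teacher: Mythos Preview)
Your approach is correct and complete, but it differs from the paper's in one substantive respect.

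Both arguments start from (\ref{FG1}), handle the two jump integrals by Lemma \ref{l1}(ii),(iii), and reduce the four $\mathcal{E}$-, $\hat{\mathcal{E}}$-terms to the dominating symmetric Dirichlet form $\mathcal{Q}$ via the sector condition and (\ref{GH1}). The divergence comes at the next step. You bound $\mathcal{Q}_1(g,g)^{1/2}$ directly by a $C^1$-norm of $g$ using the Beurling--Deny formula (\ref{new2}) for $\mathcal{Q}$, obtain a global $C^1$-estimate on $\langle\Psi_i^{\delta},g\rangle$, and then invoke Hahn--Banach followed by the Riesz theorem for signed measures. The paper instead uses the \emph{Hilbert space} Riesz theorem: each of the four $\mathcal{E}$-functionals is $\tilde{\mathcal{Q}}_1^{1/2}$-bounded, hence equals $\mathcal{Q}_1(u,\cdot)$ for some $u\in D(\mathcal{Q})$, and then the paper shows that $v\mapsto\mathcal{Q}(u,v)$ has a first-order measure representation for \emph{every} $u\in D(\mathcal{Q})$ by invoking the extended Beurling--Deny formula with energy measures $\mu^c_{\langle u,\xi_j\rangle}$ (\cite[Theorems 3.2.2 and 5.3.1]{Fu94}), the mean value theorem on the jump term, and Riesz.

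What this buys: your route is more elementary---it needs the Beurling--Deny formula only for $g\in C_0^{\infty}$, avoiding the deeper structure theory \cite[Theorem 5.3.1]{Fu94}---but it is non-constructive, since the measures come out of Hahn--Banach and are not canonical. The paper's route is more explicit: the $\mu_{ij}^O$ arise as energy measures of $\mathcal{Q}$. One small point: your splitting of the non-local $\mathcal{Q}$-term at $|x-y|=1$ should really be at some $\epsilon>0$ chosen so that the $\epsilon$-neighbourhood of $\overline{O}$ stays inside an intermediate $O_1$ with $\overline{O_1}\subset O_0$; this is what makes both halves of (\ref{nb11}) applicable with constants depending only on $O,O_1,O_0$.
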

\begin{proof}
Let $O$ be a relatively compact open set of $U^{\delta}$. By Assumption \ref{as2}, there exist an
open set $O_0$ of $U^{\delta}$ satisfying $\overline{O}\subset
O_0$  and a regular symmetric Dirichlet form $({\cal Q}, D({\cal
Q}))$ on $L^2(O_0;m)$ such that $C^{\infty}_0(O_0)\subset D({\cal
Q})$ and (\ref{GH1}) holds.

By (\ref{FG1}), (\ref{GH1}), the sector condition and Lemma \ref{l1} (ii) and (iii), to prove the corollary, we need only show that for any $u\in D({\cal
Q})$ there exist signed Radon measures $\mu^u$ and $\{\mu^u_{j}\}_{j=1}^n$ on $O$ such that
$$
{\cal Q}(u,v)=\int_{O}v(x)\mu^u(dx)+\sum_{j=1}^n\int_{O}\frac{\partial v}{\partial x_j}(x)\mu^u_{j}(dx),\ \ \forall v\in C^{\infty}_{0}(O).
$$
By \cite[Theorems 3.2.2 and 5.3.1]{Fu94}, we get
\begin{eqnarray*}
& &{\cal Q}(u,v)=\frac{1}{2}\sum_{j=1}^n\int_{O_0}\frac{\partial v}{\partial x_j}\mu^c_{<u,\xi_j>}(dx)\nonumber\\
& &\ \ \ \ +\int_{O_0\times O_0\backslash
d}(\tilde u(x)-\tilde u(y))(v(x)-v(y))J^{\cal Q}(dxdy)+\int_{O_0}u(x)v(x)K^{\cal
Q}(dx),\ \ \ \ \ \ \ \ \ \ \
\end{eqnarray*}
where $\xi_j\in C^{\infty}_0(U^{\delta})$ satisfying $\xi_j=x_j$ on $O$ for $1\le j\le n$ as in (\ref{FG1}), $\mu^c$ denotes the local part of the energy measure of $({\cal Q}, D({\cal
Q}))$, $\tilde u$ denotes a quasi-continuous version of $u$. Therefore, the proof is complete by the mean value theorem, (\ref{nb11}) and the Riesz representation theorem.
\end{proof}

%%%%%%%%%%%%%%%%%%%%%%%%%%%%%%%%%%%%%%%%%%%%%%%%%%%%%%%%%%%%%%%%%%%%%%%%%%%%%%%%%%%%%%%%%%%%%%%%%%
\section [short title]{LeJan type transformation rule for the diffusion part of
regular semi-Dirichlet forms}
\setcounter{equation}{0}

In this section, we will apply some ideas of  Section 2 to investigate the structure of general regular semi-Dirichlet forms. Throughout this section, we let $E$ be a locally compact separable metric space, $m$ a positive Radon measure on $E$ with ${\rm supp}[m]=E$, and $({\cal E}, D({\cal E}))$ a regular semi-Dirichlet form on $L^2(E;m)$.

Following Section 1, we use $J$ and $K$ to denote respectively the jumping and killing measures of $({\cal E}, D({\cal E}))$. By \cite[Corollary 2.2]{HC06}, there exists a unique positive Radon measure $\sigma_{\beta}$ on $E\times E$ satisfying
$$
(\beta G_{\beta}u,v)=\int_{E\times E}u(x)v(y)\sigma_{\beta}(dxdy)\ \ {\rm for}\ u,v\in L^2(E;m).
$$
Hereafter $(\cdot,\cdot)$ denotes the inner product of $L^2(E;m)$ and $(G_{\beta})_{\beta>0}$ denotes the resolvent of $({\cal E}, D({\cal E}))$. We have $(\beta/2)\sigma_{\beta}\rightarrow J$ vaguely on $E\times E\backslash d$ as $\beta\rightarrow\infty$ (cf. the proof of \cite[Theorem 2.6]{HC06}). Define $\hat J(dxdy):=J(dydx)$, $\hat\sigma_{\beta}(dxdy):=\sigma_{\beta}(dydx)$, and denote by $(\hat G_{\beta})_{\beta>0}$ the co-resolvent of $({\cal E}, D({\cal E}))$. Then, we have
$$
(\beta \hat G_{\beta}u,v)=\int_{E\times E}u(x)v(y)\hat\sigma_{\beta}(dxdy)\ \ {\rm for}\ u,v\in L^2(E;m)
$$
and $(\beta/2)\hat\sigma_{\beta}\rightarrow \hat J$ vaguely on $E\times E\backslash d$ as $\beta\rightarrow\infty$.

Let $\rho$ be the metric on $E$. We choose a sequence of
relatively compact open sets $\Omega_l\uparrow E$ and a  sequence
of numbers $\varsigma_l\downarrow 0$ such that the set
$\Gamma_l=\{(x,y)\in \Omega_l\times \Omega_l:\rho(x,y)\ge
\varsigma_l\}$ is a continuous set w.r.t. $J$ for every $l\in
\mathbb{N}$. Denote $\Lambda_l=\{(x,y)\in \Omega_l\times
\Omega_l:\rho(x,y)<\varsigma_l\}$.

We make the following assumption.
\begin{assum}\label{as33}
For $f,g\in C_{0}(E)\cap D(\mathcal{E})$, we have $fg\in
C_{0}(E)\cap D(\mathcal{E})$ and $(f(y)-f(x))g(y)$ is integrable
w.r.t. $J$.
\end{assum}

Suppose $u,v\in C_{0}(E)\cap D(\mathcal{E})$. Let $\chi\in C_{0}(E)\cap D(\mathcal{E})$ satisfying $\chi=1$ on a neighbourhood of
${\rm supp}[u]\cup{\rm supp}[v]$. Then, by Assumption \ref{as33}, we get
\begin{eqnarray}\label{v1}
& &\hat{\mathcal{E}}(u,v)=\lim_{\beta\rightarrow\infty}\beta(u-\beta\hat {G}_{\beta}u,v)\nonumber\\
&=&\lim_{\beta\rightarrow\infty}\beta\left\{\int_{E\times E}(u(y)-u(x))v(y)\chi(x)\hat{\sigma}_{\beta}(dxdy)\right.\nonumber\\
& &\ \ \ \ +\left.\int_{E}\chi(x)u(x)v(x)m(dx)-\int_{E\times E}\chi(x)u(y)v(y)\hat{\sigma}_{\beta}(dxdy)\right\}\nonumber\\
&=&\lim_{\beta\rightarrow\infty}\beta\int_{E\times E}(u(y)-u(x))v(y)\chi(x)\hat{\sigma}_{\beta}(dxdy)+\hat{\cal E}(\chi,uv)\nonumber\\
&=&\lim_{l\rightarrow\infty}\lim_{\beta\rightarrow\infty}\beta\int_{\Lambda_l}(u(y)-u(x))v(y)\hat\sigma_{\beta}(dxdy)\nonumber\\
& &+\int_{E\times E\backslash d}2(u(y)-u(x))v(y)\chi(x)\hat J(dxdy)+\hat{\cal E}(\chi,uv).
\end{eqnarray}
Hence we can well define
\begin{eqnarray}\label{v2}
\hat{\mathcal{E}}^{c}(u,v):=\lim_{l\rightarrow\infty}\lim_{\beta\rightarrow\infty}\beta\int_{\Lambda_l}
(u(y)-u(x))v(y)\hat\sigma_{\beta}(dxdy).
\end{eqnarray}
$\hat{\mathcal{E}^{c}}(u,v)$ satisfies the left strong local property in the sense that $\hat{\mathcal{E}^{c}}(u,v)=0$ whenever $u$ is constant on a neighbourhood of ${\rm supp}[v]$. By (\ref{v1}) and (\ref{v2}), we obtain the decomposition
\begin{eqnarray}\label{BD100}
\hat\mathcal{E}(u,v)=\hat\mathcal{E}^{c}(u,v)+\int_{E\times E\backslash d}2(u(y)-u(x))v(y)\chi(x)\hat J(dxdy)+\hat {\cal E}(\chi,uv).
\end{eqnarray}

Similar to (\ref{v1}), we can show that
\begin{eqnarray}\label{vv1}
{\mathcal{E}}(u,v)&=&\lim_{l\rightarrow\infty}\lim_{\beta\rightarrow\infty}\beta\int_{\Lambda_l}(u(y)-u(x))v(y)\sigma_{\beta}(dxdy)\nonumber\\
& &+\int_{E\times E\backslash d}2(u(y)-u(x))v(y)\chi(x)J(dxdy)+{\cal E}(\chi,uv).
\end{eqnarray}
By (\ref{BD2}) and (\ref{vv1}), we get
\begin{eqnarray}\label{vv11}
{\mathcal{E}}^c(u,v)&=&\lim_{l\rightarrow\infty}\lim_{\beta\rightarrow\infty}\beta\int_{\Lambda_l}(u(y)-u(x))v(y)\sigma_{\beta}(dxdy)\nonumber\\
& &+\int_{E\times E\backslash d}2(u(y)-u(x))v(y)\chi(x)J(dxdy)\nonumber\\
& &+\int_{E\times E\backslash d}2(\chi(y)-\chi(x))(uv)(y)J(dxdy)+\int_Eu(x)v(x)K(dx)\nonumber\\
& &-\int_{E\times E\backslash d}2(u(y)-u(x))v(y)J(dxdy)-\int_Eu(x)v(x)K(dx)\nonumber\\
&=&\lim_{l\rightarrow\infty}\lim_{\beta\rightarrow\infty}\beta\int_{\Lambda_l}(u(y)-u(x))v(y)\sigma_{\beta}(dxdy).
\end{eqnarray}

For $r\in\mathbb{N}$, we choose a $w\in C_{0}(E)\cap
D(\mathcal{E})$ satisfying $w\ge 0$ and $w|_{\Omega_r}\equiv 1$.
For $f\in C_{0}(\Omega_r)\cap D(\mathcal{E})$, we obtain by
(\ref{vv11}) and the sub-Markovian property of
$(G_{\beta})_{\beta>0}$ that
\begin{eqnarray*}
& &|2\mathcal{E}^{c}(u,uf)-\mathcal{E}^{c}(u^{2},f)|\\
&=&\left|\lim_{l\rightarrow\infty}\lim_{\beta\rightarrow\infty}\beta\int_{\Lambda_l}(u(y)-u(x))^2f(y)\sigma_{\beta}(dxdy)\right|\\
&\le&\|f\|_{\infty}\lim_{\beta\rightarrow\infty}\beta\int_{E\times E}(u(y)-u(x))^2w(y)\sigma_{\beta}(dxdy)\\
&\le&\|f\|_{\infty}\lim_{\beta\rightarrow\infty}\{2\beta(u-\beta G_{\beta}u,uw)-\beta(u^2-\beta G_{\beta}u^2,w)\}\\
&=&(2{\cal E}(u,uw)-{\cal E}(u^2,w))\|f\|_{\infty}.
\end{eqnarray*}
Then, there exists a unique Radon measure $\mu_{<u>}^{r,c}$ on
$\Omega_r$ such that
$$
\int_{\Omega_r}fd\mu_{<u>}^{r,c}=2\mathcal{E}^{c}(u,uf)-\mathcal{E}^{c}(u^{2},f),\
\ \forall f\in C_{0}(\Omega_r)\cap D(\mathcal{E}).
$$
It is easy to see that $\{\mu_{<u>}^{r,c}\}$ is a consistent
sequence of Radon measures. Therefore, we can well define the
measure $\mu_{<u>}^{c}$ by $\mu_{<u>}^{c}=\mu_{<u>}^{r,c}$ on
$\Omega_r$, which satisfies
$$
\int_{E}fd\mu_{<u>}^{c}=2\mathcal{E}^{c}(u,uf)-\mathcal{E}^{c}(u^{2},f),\
\ \forall f\in C_{0}(E)\cap D(\mathcal{E}).
$$

We define
$$
\mu_{<u,v>}^c:=\frac{1}{2}(\mu_{<u+v>}^c-\mu_{<u>}^c-\mu_{<v>}^c).
$$
Then
$$
\int_{E}fd\mu_{<u,v>}^c=\mathcal{E}^{c}(u,vf)+\mathcal{E}^{c}(v,uf)-\mathcal{E}^{c}(uv,f),\ \ f\in C_{0}(E)\cap D(\mathcal{E}).
$$
Hence, for any $h\in C_{0}(E)\cap D(\mathcal{E})$ satisfying $h|_{{\rm supp}[u]\cup{\rm supp}[v]}\equiv1$, we have
\begin{equation}\label{bv1v}
\mathcal{E}^{c}(u,v)+\mathcal{E}^{c}(v,u)=\int_{E}hd\mu_{<u,v>}^c+\mathcal{E}^{c}(uv,h).
\end{equation}

We define a linear functional $L(u,v)$ on $C_{0}(E)\cap D(\mathcal{E})$ by
$$<L(u,v),f>:=\frac{1}{2}(\mathcal{E}^{c}(u,vf)-\hat{\mathcal{E}}^{c}(u,vf)),\ \ f\in C_{0}(E)\cap D(\mathcal{E}).
$$
Then, for any $h\in C_{0}(E)\cap D(\mathcal{E})$ satisfying $h|_{{\rm supp}[u]}\equiv1$, we have
\begin{equation}\label{bv1}
\mathcal{E}^{c}(u,v)-\hat\mathcal{E}^{c}(u,v)=2<L(u,v),h>.
\end{equation}
By (\ref{BD2}), (\ref{BD100}) and (\ref{bv1}), we get
\begin{eqnarray}\label{eee4}
& &\mathcal{E}^{c}(u,v)-\mathcal{E}^{c}(v,u)\nonumber\\
&=&\mathcal{E}^{c}(u,v)-\mathcal{E}(v,u)+\int_Eu(x)v(x)K(dx)+\int_{E\times E\backslash d}2(v(y)-v(x))u(y)J(dxdy)\nonumber\\
&=&\mathcal{E}^{c}(u,v)-\hat\mathcal{E}^{c}(u,v)-\hat {\cal E}(\chi,uv)+\int_Eu(x)v(x)K(dx)\nonumber\\
&&-\int_{E\times E\backslash d}2(u(y)-u(x))v(y)\chi(x)\hat J(dxdy)+\int_{E\times E\backslash d}2(v(y)-v(x))u(y)J(dxdy)\nonumber\\
&=&2<L(u,v),\chi>-{\cal E}(uv,\chi)+\int_Uu(x)v(x)K(dx)\nonumber\\
&&-\int_{E\times E\backslash d}2(u(y)-u(x))v(y)\chi(x)\hat J(dxdy)+\int_{E\times E\backslash d}2(v(y)-v(x))u(y)J(dxdy)\nonumber\\
&=&2<L(u,v),\chi>-{\cal E}^{c}(uv,\chi)\nonumber\\
&&-\int_{E\times E\backslash d}2((uv)(y)-(uv)(x))\chi(y)J(dxdy)\nonumber\\
&&-\int_{E\times E\backslash d}2(u(x)-u(y))v(x)\chi(y)J(dxdy)+\int_{E\times E\backslash d}2(v(y)-v(x))u(y)J(dxdy)\nonumber\\
&=&2<L(u,v),\chi>-{\cal E}^{c}(uv,\chi).
\end{eqnarray}

By (\ref{bv1v}) and (\ref{eee4}), we obtain the following
expression of the diffusion part $\mathcal{E}^{c}$.

\begin{thm}\label{thm07}
Suppose Assumption \ref{as33} holds. Let $u,v\in C_{0}(E)\cap D(\mathcal{E})$ and $\chi\in C_{0}(E)\cap D(\mathcal{E})$ satisfying $\chi=1$ on a neighbourhood of ${\rm supp}[u]\cup{\rm supp}[v]$. Then
$$
\mathcal{E}^{c}(u,v)=\frac{1}{2}\int_{E}\chi d\mu_{<u,v>}^c+<L(u,v),\chi>.
$$
\end{thm}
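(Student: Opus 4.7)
The plan is straightforward: the identity decomposes $\mathcal{E}^c(u,v)$ into its symmetric and antisymmetric parts in $u,v$, which correspond precisely to the two identities (\ref{bv1v}) and (\ref{eee4}) already established in the lead-up. So the proof should just combine those two equations.

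First, I would specialize (\ref{bv1v}) to the choice $h=\chi$. This is legitimate because the hypothesis of the theorem gives $\chi\in C_{0}(E)\cap D(\mathcal{E})$ with $\chi\equiv 1$ on a neighbourhood of ${\rm supp}[u]\cup{\rm supp}[v]$, which is exactly the condition (\ref{bv1v}) imposes on $h$. This yields
\[
\mathcal{E}^{c}(u,v)+\mathcal{E}^{c}(v,u)=\int_{E}\chi\,d\mu_{<u,v>}^{c}+\mathcal{E}^{c}(uv,\chi).
\]
Next I recall (\ref{eee4}), which already uses the same cutoff $\chi$:
\[
\mathcal{E}^{c}(u,v)-\mathcal{E}^{c}(v,u)=2<L(u,v),\chi>-\mathcal{E}^{c}(uv,\chi).
\]

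Adding the two identities, the $\mathcal{E}^{c}(v,u)$ terms cancel and the $\mathcal{E}^{c}(uv,\chi)$ terms cancel, producing
\[
2\mathcal{E}^{c}(u,v)=\int_{E}\chi\,d\mu_{<u,v>}^{c}+2<L(u,v),\chi>,
\]
which after dividing by $2$ is exactly the claimed formula.

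The real content of the theorem has in fact already been extracted in deriving (\ref{bv1v}) and (\ref{eee4}); the main obstacles there were (a) ensuring the integrability of $(u(y)-u(x))v(y)$ against both $J$ and $\hat{J}$, which is where Assumption \ref{as33} is used, and (b) rewriting $\mathcal{E}^{c}(v,u)$ in terms of $\hat{\mathcal{E}}^{c}(u,v)$ via the symmetric decomposition (\ref{BD2}) and the dual expression (\ref{BD100}), and then converting the resulting skew-symmetric combination into the functional $L(u,v)$. Once those two intermediate identities are in hand, the addition step described above is routine and yields the theorem.
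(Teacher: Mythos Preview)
Your proof is correct and matches the paper's approach exactly: the paper simply states that the theorem follows ``by (\ref{bv1v}) and (\ref{eee4}),'' which is precisely the addition-and-halving argument you spell out. The extra commentary you give about where the real content lies (Assumption \ref{as33}, the passage through $\hat{\mathcal{E}}^{c}$) is accurate and faithful to the development preceding the theorem.
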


Similar to Theorem \ref{thm9}, we can derive the following transformation rules for $\mu^c_{<\cdot,\cdot>}$ and $L(\cdot,\cdot)$.

\begin{thm}\label{hhg}
Let $u, v, w, f\in C_{0}(E)\cap D(\mathcal{E})$. Then

\noindent (i) $d\mu^{c}_{<uv,w>}={u}d\mu^{c}_{<v,w>}+{v}d\mu^{c}_{<u,w>}$.

\noindent (ii) $<L(u,vw),f>=<L(u,v),wf>$.

\noindent (iii) $<L(uv,w),f>=<L(u,w),vf>+<L(v,w),uf>$.
\end{thm}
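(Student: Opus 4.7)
The plan is to mirror the proof of Theorem \ref{thm9} almost verbatim, with the role of $C^\infty_0(U^\delta)$ replaced by $C_0(E)\cap D(\mathcal{E})$ and Assumption \ref{as33} ensuring that all the products we form remain in the domain. The starting point is to rewrite $\mu^c_{<u,v>}$ via the $\sigma_\beta$-approximations: from (\ref{vv11}) and the polarization defining $\mu^c_{<u,v>}$, one obtains
\begin{equation*}
\int_E g\,d\mu^c_{<u,v>}
=\lim_{l\to\infty}\lim_{\beta\to\infty}\beta\int_{\Lambda_l}(u(y)-u(x))(v(y)-v(x))g(y)\,\sigma_\beta(dxdy),
\end{equation*}
for all $g\in C_0(E)\cap D(\mathcal{E})$. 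This is the analog of (\ref{hgfv}), and it will be the workhorse for (i).

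First I would dispose of (ii): by the very definition of $L$, both $\langle L(u,vw),f\rangle$ and $\langle L(u,v),wf\rangle$ equal $\tfrac12(\mathcal{E}^c(u,vwf)-\hat{\mathcal{E}}^c(u,vwf))$, so the identity is tautological once one notes that $vwf\in C_0(E)\cap D(\mathcal{E})$ by Assumption \ref{as33}.

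For (i), by polarization it suffices to prove $d\mu^c_{<u^2,v>}=2u\,d\mu^c_{<u,v>}$. Pick $\chi\in C_0(E)\cap D(\mathcal{E})$ equal to $1$ on a neighborhood of $\mathrm{supp}[u]\cup\mathrm{supp}[v]\cup\mathrm{supp}[g]$. Applying the displayed representation above to $u^2,v$ and to $u,v$ (with weight $ug$ in place of $g$) and subtracting yields
\begin{equation*}
\int_E g\,d\mu^c_{<u^2,v>}-2\int_E ug\,d\mu^c_{<u,v>}
=-\lim_{l\to\infty}\lim_{\beta\to\infty}\beta\int_{\Lambda_l}(u(y)-u(x))^2(v(y)-v(x))g(y)\,\sigma_\beta(dxdy).
\end{equation*}
Split the integrand by $\chi(x)+(1-\chi(x))$. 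On the $(1-\chi(x))$ piece, shrinking $\varsigma_l\downarrow 0$ forces $x$ to lie in the neighborhood where $\chi=1$ whenever $y\in\mathrm{supp}[u]\cup\mathrm{supp}[v]\cup\mathrm{supp}[g]$, so this piece vanishes for large $l$. On the $\chi(x)$ piece, the vague convergence $(\beta/2)\sigma_\beta\to J$ on $E\times E\setminus d$ produces $\int_{\Lambda_l}2(u(y)-u(x))^2(v(y)-v(x))g(y)\chi(x)\,J(dxdy)$; using $|(u(y)-u(x))^2|\le C\rho(x,y)^2$ on compact sets (after composing $u$ with a $C^1$ cut-off or arguing via the local Lipschitz estimate coming from the analog of Lemma \ref{l1}(i),(ii) in the general setting), and the finiteness of $\int_{K\times K\setminus d}\rho(x,y)^2 J(dxdy)$ on compact $K$, this quantity tends to $0$ as $l\to\infty$ because $\Lambda_l$ shrinks to the diagonal.

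For (iii), by polarization it suffices to prove $\langle L(u^2,v),f\rangle=2\langle L(u,v),uf\rangle$. Using (\ref{vv11}) together with the analogous identity for $\hat\sigma_\beta$ derived from (\ref{v1})–(\ref{v2}), expand both sides as limits of $\beta\sigma_\beta$- and $\beta\hat\sigma_\beta$-integrals over $\Lambda_l$. The difference collapses to
\begin{equation*}
-\lim_{l\to\infty}\lim_{\beta\to\infty}\tfrac{\beta}{2}\int_{\Lambda_l}(u(y)-u(x))^2\bigl(v(y)f(y)-v(x)f(x)\bigr)\sigma_\beta(dxdy),
\end{equation*}
and the same cut-off argument as in (i), applied symmetrically in $x$ and $y$ via $\chi(x)\chi(y)$ versus $(1-\chi(x))$ and $(1-\chi(y))$ pieces, forces the limit to be zero. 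The main obstacle throughout is the justification of the successive limits in $\beta$ and then $l$, which rests on the Schwarz-type estimate $\int (u(y)-u(x))^2 w(y) J(dxdy)\le\mathcal{E}(u,uw)-\tfrac12\mathcal{E}(u^2,w)$ transplanted from Lemma \ref{l1}(i) to the $(E,\rho)$ setting; once this a priori bound is in hand, the remaining manipulations are a direct translation of the $\mathbb{R}^n$ arguments of Theorem \ref{thm9}, together with Theorem \ref{thm07} to invoke the left strong local decomposition when $u$ is constant near $\mathrm{supp}[v]$.
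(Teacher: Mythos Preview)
Your overall strategy is exactly what the paper intends (it simply declares the proof ``similar to Theorem \ref{thm9}''), and your treatment of (ii) as a tautology and the algebraic reduction of (i) and (iii) to the $u^2$ case are correct.

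There is, however, a genuine error in the step where you argue that the $\chi(x)$-piece in (i) (and analogously the $\chi(x)\chi(y)$-piece in (iii)) vanishes as $l\to\infty$. You invoke $(u(y)-u(x))^2\le C\rho(x,y)^2$ and $\int_{K\times K\setminus d}\rho(x,y)^2\,J(dxdy)<\infty$. Neither is available on a general locally compact metric space: elements of $C_0(E)\cap D(\mathcal{E})$ need not be Lipschitz (there is no differential structure, so ``composing with a $C^1$ cut-off'' is meaningless), and Lemma \ref{l1}(ii) was proved using the coordinate functions $x_i$ on $\mathbb{R}^n$, which have no counterpart here. The correct justification is the one you yourself mention at the end but do not deploy: the analog of Lemma \ref{l1}(i), whose proof uses only the sub-Markov property of $(G_\beta)$ and Assumption \ref{as33}, gives
\[
\int_{E\times F\setminus d}(u(y)-u(x))^2\,J(dxdy)<\infty
\]
for every compact $F\subset E$. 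Since $(v(y)-v(x))g(y)\chi(x)$ is bounded and $\Lambda_l$ shrinks to the diagonal, dominated convergence yields
\[
\lim_{l\to\infty}\int_{\Lambda_l}(u(y)-u(x))^2(v(y)-v(x))g(y)\chi(x)\,J(dxdy)=0
\]
directly, with no Lipschitz estimate needed. This is precisely how the paper argues in the proof of Theorem \ref{thm9}(i), where it cites Lemma \ref{l1}(i) rather than (ii).
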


We use ${\cal F}_{loc}$ to denote the set of all functions $u$
such that for any relatively compact open set $V$ there exists a
$w\in C_{0}(E)\cap D(\mathcal{E})$ such that $u=w$ on $V$. Then,
by an argument similar to that given after the proof of Theorem
\ref{thm9}, we can extend   $\mu^c_{<u,v>}$ and $L(u,v)$ to
$u,v\in {\cal F}_{loc}$. The  transformation rules given in
Theorem \ref{hhg} still hold with $C_{0}(E)\cap D(\mathcal{E})$
replaced by ${\cal F}_{loc}$.

Now we make the following assumption.

\begin{assum}\label{jw1}
There exist a sequence of Dirichlet forms $({\cal Q}^l,
D({\cal Q}^l))$ on $L^2(\Omega_l;m)$ and a sequence of positive
constants $C_l$ such that $C_0(\Omega_l)\cap
D(\mathcal{E})=C_0(\Omega_l)\cap D({\cal Q}_l)$ and
$$
{\cal E}_1(g,g)\le C_l{\cal Q}^l_1(g,g), \ \ \forall g\in
C_0(\Omega_l)\cap D(\mathcal{E}).
$$
\end{assum}

\begin{thm}\label{mn1} Suppose Assumption \ref{jw1} holds and $J$ is a finite measure on $E\times E\backslash d$. Let $\Phi\in C^2(\mathbb{R}^m)$, $u_1,\dots,u_m,v,w\in {\cal F}_{loc}$ and $f\in C_{0}(E)\cap D(\mathcal{E})$. Then

\noindent (i) $d\mu^c_{<\Phi(u_1,\dots,u_m),v>}=\sum_{i=1}^m\Phi_{x_i}(u_1,\dots,u_m)d\mu^c_{<u_i,v>}$.

\noindent (ii) $<L(\Phi(u_1,\dots,u_m),vw),f>=\sum_{i=1}^m<L(u_i,v),\Phi_{x_i}(u_1,\dots,u_m)wf>$.
\end{thm}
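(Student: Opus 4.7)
The plan is to follow the proof of Theorem \ref{t28} closely: first establish (i) and (ii) for polynomials vanishing at the origin by iterating the Leibniz-type rules in Theorem \ref{hhg}, then extend to a general $\Phi\in C^2$ by uniform approximation by such polynomials on a cube $V\subset\mathbb{R}^m$ containing the range of $u=(u_1,\dots,u_m)$. The role of Assumption \ref{jw1} is to provide, on each $\Omega_l$, a symmetric Dirichlet form $\mathcal{Q}^l$ that dominates $\mathcal{E}$, so that the chain rule and the classical LeJan transformation rule for $\mathcal{Q}^l$ can be brought to bear; the finiteness of $J$ is what will make the jump integrals pass to the limit by ordinary dominated convergence.

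First I would use the localization described just after Theorem \ref{hhg} to reduce to the case $u_1,\dots,u_m,v,w\in C_0(E)\cap D(\mathcal{E})$, all supported in some fixed $\Omega_l$, and I would replace $\Phi$ by $\Phi-\Phi(0)$ so that $\Phi(0)=0$ (this does not affect either side of (i) or (ii)). Let $\mathcal{C}$ denote the family of $\Phi\in C^2(\mathbb{R}^m)$ with $\Phi(0)=0$ for which both (i) and (ii) hold. Parts (i) and (iii) of Theorem \ref{hhg}, combined with (ii), show in a one-line Leibniz computation that $\mathcal{C}$ is closed under pointwise multiplication; since $\mathcal{C}$ contains the coordinate functions (directly from Theorem \ref{hhg}(ii)), it contains every polynomial vanishing at the origin. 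I would then pick polynomials $\Phi^{(k)}$ vanishing at the origin with $\Phi^{(k)}\to\Phi$ and $\Phi^{(k)}_{x_i}\to\Phi_{x_i}$ uniformly on $V$ (with uniform $C^1$-bound).

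Both identities hold for each $\Phi^{(k)}$; the right-hand sides converge in the limit because $\Phi^{(k)}_{x_i}(u)\to\Phi_{x_i}(u)$ uniformly while $d\mu^c_{<u_i,v>}$ is a signed Radon measure and $L(u_i,v)$ is a linear functional on $C_0(E)\cap D(\mathcal{E})$. For the left-hand sides I would expand
$$\int_E f\,d\mu^c_{<\Phi^{(k)}(u),v>}=\mathcal{E}^c(\Phi^{(k)}(u),vf)+\mathcal{E}^c(v,\Phi^{(k)}(u)f)-\mathcal{E}^c(\Phi^{(k)}(u)v,f),$$
and similarly $2<L(\Phi^{(k)}(u),vw),f>=\mathcal{E}^c(\Phi^{(k)}(u),vwf)-\hat{\mathcal{E}}^c(\Phi^{(k)}(u),vwf)$, then use the decompositions (\ref{BD2}) and (\ref{BD100}) to rewrite each $\mathcal{E}^c$ or $\hat{\mathcal{E}}^c$ term as $\mathcal{E}$ (resp.\ $\hat{\mathcal{E}}$) minus a jump integral against $J$ (resp.\ $\hat{J}$) and a killing integral against $K$. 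The jump and killing integrals converge by uniform convergence of $\Phi^{(k)}(u)\to\Phi(u)$, the finiteness of $J$, and the Radon property of $K$.

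The main obstacle is showing $\mathcal{E}(\Phi^{(k)}(u)-\Phi(u),vf)\to 0$ and its analogue for $\hat{\mathcal{E}}$. This is exactly where Assumption \ref{jw1} enters: taking $\Omega_l$ to contain all relevant supports, one has $\mathcal{E}_1\le C_l\mathcal{Q}^l_1$ on $C_0(\Omega_l)\cap D(\mathcal{E})$. By the sector condition of $\mathcal{E}$, it suffices to prove $\mathcal{Q}^l_1(\Phi^{(k)}(u)-\Phi(u),\Phi^{(k)}(u)-\Phi(u))\to 0$. Using the classical Beurling-Deny decomposition of the symmetric Dirichlet form $\mathcal{Q}^l$, the local part vanishes in the limit by LeJan's classical transformation rule applied to $\Phi^{(k)}-\Phi$ together with uniform convergence of $\Phi^{(k)}_{x_i}$ on $V$; the jump part vanishes by dominated convergence, majorized near the diagonal by a multiple of $|u(x)-u(y)|^2$ (via the uniform Lipschitz bound for $\Phi^{(k)}-\Phi$ on $V$) and away from the diagonal by $4\|\Phi^{(k)}-\Phi\|_{\infty,V}^2$ against a finite quantity; and the killing and $L^2$ pieces vanish by uniform convergence on compacts. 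Passing to the limit on both sides then yields (i) and (ii) for $\Phi$, which completes the proof.
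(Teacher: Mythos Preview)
Your overall strategy is the same as the paper's: reduce to polynomials via Theorem \ref{hhg}, then approximate a general $\Phi$ by polynomials and pass to the limit, using Assumption \ref{jw1} to get $\tilde{\mathcal{E}}_1^{1/2}$-norm convergence and the finiteness of $J$ to handle the jump integrals by ordinary dominated convergence. The paper's own proof is only a sketch that points to Theorem \ref{t28} and to \cite[(3.2.27)]{Fu94}, so your write-up actually contains more detail than the paper.

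There is, however, one genuine gap. For part (ii) you claim the right-hand side converges because ``$\Phi^{(k)}_{x_i}(u)\to\Phi_{x_i}(u)$ uniformly while \ldots\ $L(u_i,v)$ is a linear functional on $C_0(E)\cap D(\mathcal{E})$.'' Linearity alone gives nothing; you have not shown that $L(u_i,v)$ is continuous for the uniform norm, and in general it is not. What is needed is exactly the same mechanism you used for the left-hand side: expand $\langle L(u_i,v),\Phi^{(k)}_{x_i}(u)wf\rangle$ via the definition of $L$ and the decompositions (\ref{vv1})/(\ref{BD100}), and then pass to the limit. The jump pieces go through by the finiteness of $J$, but the $\mathcal{E}$ and $\hat{\mathcal{E}}$ pieces require $\Phi^{(k)}_{x_i}(u)wf\to\Phi_{x_i}(u)wf$ in the $\tilde{\mathcal{E}}_1^{1/2}$-norm. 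By Assumption \ref{jw1} and the sector condition this reduces to $\tilde{\mathcal{Q}}^l_1$-convergence of $\Phi^{(k)}_{x_i}(u)-\Phi^{(k)}_{x_i}(0)$ to $\Phi_{x_i}(u)-\Phi_{x_i}(0)$, which follows from the same LeJan/Beurling--Deny argument you gave for $\Phi^{(k)}(u)$, applied now to the $C^1$ function $\Phi_{x_i}$. This is precisely where the hypothesis $\Phi\in C^2$ (rather than $C^1$) is actually used, and it is exactly the point the paper singles out in its sketch (``convergence of \ldots\ $\Phi^{(k)}_{x_i}(u)-\Phi^{(k)}_{x_i}(0)$ \ldots\ w.r.t.\ the $\tilde{\mathcal{Q}}_1^{1/2}$-norm''). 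With this correction your argument goes through.

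One minor inaccuracy: in your use of (\ref{BD100}) for $\hat{\mathcal{E}}^c$, the compensating term is $\hat{\mathcal{E}}(\chi,uv)$, not a killing integral against some $\hat K$ (no dual killing measure is available in the semi-Dirichlet setting). This does not affect the limiting argument, since $\hat{\mathcal{E}}(\chi,\Phi^{(k)}(u)vf)\to\hat{\mathcal{E}}(\chi,\Phi(u)vf)$ again by the $\tilde{\mathcal{E}}_1^{1/2}$-norm convergence just discussed.
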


The proof of Theorem \ref{mn1} is similar and simpler than that of
Theorem \ref{t28}. We omit the details here. We only point out
that \cite[(3.2.27)]{Fu94} and  Assumption \ref{jw1}  ensure the
convergence of $\Phi^{(k)}(u)$ (resp.
$\Phi^{(k)}_{x_i}(u)-\Phi^{(k)}_{x_i}(0)$) to $\Phi(u)$ (resp.
$\Phi_{x_i}(u)-\Phi_{x_i}(0)$) w.r.t. the $\tilde{\cal
Q}_{1}^{1/2}$-norm and hence the $\tilde{\cal E}_{1}^{1/2}$-norm,
and the finiteness of $J$ ensures that the dominated convergence
theorem can be applied directly. Theorems \ref{lll} and \ref{mn1}
will be applied in a forthcoming work to obtain the strong
continuity of generalized Feynman-Kac semigroups for Markov
processes associated with semi-Dirichlet forms.

%%%%%%%%%%%%%%%%%%%%%%%%%%%%%%%%%%%%%%%%%%%%%%%%%%%%%%%%%%%%%%%%%%%%%%%%%%%%%%%%%%%%%%%%%%%%%%%%%%%%%%

\end{document}